\numberwithin{equation}{section}
\newtheorem{theorem}{Theorem}[section]
\newtheorem{corollary}[theorem]{Corollary}
\newtheorem{definition}[theorem]{Definition}
\newtheorem{definitions}[theorem]{Definitions}
\newtheorem{lemma}[theorem]{Lemma}
\newtheorem{proposition}[theorem]{Proposition}
\newtheorem{remark}[theorem]{Remark}
\newcommand{\nc}{\newcommand}
\nc{\rnc}{\renewcommand}
\nc{\Rho}{{\rm P}}
\nc{\ip}{idempotent}
\nc{\Ip}{Idempotent}
\nc{\sgp}{semigroup}
\nc{\alg}{algebra}
\nc{\sla}{semilattice}
\nc{\ioi}{if and only if~}
\nc{\cs}{completely simple}
\nc{\ch}{character}
\nc{\supp}{\operatorname{supp}}
\title[Trace- and pseudo-products]{Trace- and pseudo-products: Restriction-like semigroups with a band of projections}
\author{D. G. FitzGerald  \& M. K. Kinyon }
\address{School of Natural Sciences, University of Tasmania, POB 37 Hobart, Australia 7001}
\address{Department of Mathematics, 
University of Denver, 
Denver CO 80208}
\date{\today}                                          
\begin{document}
\maketitle 

\begin{abstract}
We ascertain conditions and structures on categories and semigroups which admit the construction of pseudo-products and trace products respectively, making their connection as precise as possible.  This topic is modelled on the ESN Theorem and its generalisation to ample semigroups.  Unlike some other variants of ESN, it is self-dual (two-sided), and the condition of commuting projections is relaxed. The condition that projections form a band (are closed under multiplication) is shown to be a very natural one.  One-sided reducts are considered, and compared to (generalised) D-semigroups.  
Finally the special case when the category is a groupoid is examined.  
\end{abstract}
\section{Introduction} 

The distant aim of this paper is to enrich the study of a groupoid 
 over a skew lattice of identities, begun by the first author in \cite{Fi}.  But since a skew lattice possesses two operations, each of which confers a band structure,  a necessary step is to study a groupoid, or more generally a category, over a band of objects.   That is the more immediate aim here, and locates the paper squarely in the tradition of the Ehresmann-Schein-Nambooripad (ESN) Theorem, which (in its various forms) asserts equivalence of certain kinds of categories having extra structure with certain special kinds of semigroups.  This  equivalence can help elucidate one kind of structure in terms of the other; for example, the ESN Theorem has recently been used to design a relatively efficient enumeration of finite inverse semigroups \cite{Ma}.  
We must take a little further space to summarise the location of the present paper in this extensive field. 
 
Ehresmann's use of  pseudogroups in describing symmetries in differential geometry, and Wagner's and Schein's similar use of groupoids and inverse semigroups, are described by Lawson in Sections 1.1--3 of the book \cite{LaIS}, with a history of the ideas behind their respective contributions outlined in Section 4.4 of the same.  
The interested reader is referred to those sections, to the extended account of Hollings  \cite{Ho2}, or to the  original works cited therein.  
Suffice it to say here that the constructions of the trace product in a suitable semigroup, and the pseudoproduct in a suitable groupoid, are central to the success of the Theorem.

Nambooripad's concern was with regular semigroups, and the constructions in his works are more general and more complex; see the survey article of Szendrei \cite{Sz}, especially Section 4.  His point of departure was the description by Miller and Clifford of a completely $0$-simple (regular) semigroup as a Rees groupoid \cite {MiCl}; he described and formulated the notion of a bi-ordered set (characterising the set of idempotents, and initiating a whole field of research which continues today).  This he used in conjunction with a particular kind of ordered groupoid generalising Rees groupoids (the inductive groupoids in his terminology---a different use of the term than elsewhere), to formulate his extension of  the Theorem, incidentally making it fully categorical in flavour.
 Lawson \cite{La91} later extended the use of ordered groupoids to ordered (small) categories, and this kind of generalisation is also followed in the paper to hand.

All these authors were concerned to explore the conditions which made the correspondence work; of course there are many ``answers'' possible.  This paper has the declared aim of tackling this question in the context of the idempotents or projections forming a band (it will be seen that this  condition arises naturally). 
Thus the context here is broader than the semilattice of idempotents of Ehresmann and Schein, but not as broad as Nambooripad's bi-ordered sets.  
The semigroups we consider are a particular class of the $P$-semiabundant semigroups of Section 1  in Lawson \cite{La91}, but distinct from the Ehresmann semigroups of Section 3, which are the main focus of that paper.   
   
It was realised, particularly in work of the `York school',  that the field overlaps (and has been extended to) other semigroup properties, some of which arose in the theory of semigroup acts; notably, it was refreshed and organised by Lawson.  
Its ramifications and notation have been rationalised in the unpublished but very influential {\it Notes} of Gould \cite{GoN};   Hollings \cite{Ho} surveyed its reach and history.  Again the reader seeking more detail is referred to these surveys.  
Most recently, contributions relevant to the present paper have been made by Wang \cite{Wa}, Kudryavtseva \cite{Ku} and El-Qallali \cite{Q}. 
Wang's paper \cite{Wa}---which includes a useful summary of the several versions of ESN equivalences mentioned above---is the key part of a project designed to achieve an  ESN equivalence for (a generalisation of) orthodox semigroups.  The present paper begins, in contrast, at the category side.  We thus circumvent or postpone some of the arcane semigroup properties otherwise required, such as generalised Green's relations and congruence properties.  When specialising to groupoids, however, we reach a similar endpoint to Wang's, and the relationship between our approaches is yet to be explored.

Jones's paper \cite{Jo} on restriction semigroups is close in spirit to ours on the semigroup side, though we also consider equivalent categories.  Jones relativises the Ehresmann-style axioms to a specified set  $P$ of commuting projections; in a generalisation, he includes in his scheme the regular $\ast$-semigroups of Nordahl and Scheiblich \cite{NS}, in which $P$ is generally not closed under multiplication.  More recently, Branco et al. \cite{BGG} and Lawson \cite{La21} have focused on the class of Ehresmann semigroups.  In contrast to all these, our $P$ is always a band.

An important distinction among all these studies is between one-sided and two-sided conditions.  Gould and Stokes \cite{GoSt, St1, St2} have addressed corresponding questions for the one-sided case, in a strand which goes back to Cockett and Lack's \emph{restriction categories}  \cite{CoLa}.  
Because of our concern with categories and the ESN theorem, our approach is mainly two-sided, though we are able also to make a small contribution in the one-sided theory in Sections 4 and 6 below.  
All that said, in the end we aim to understand the structures that arise when a band or a skew lattice is extended in a manner analogous with inverse semigroup extensions of a semilattice---or a lattice, as in boolean inverse semigroups.
\section{Small categories}
The system $(C, \circ, +, - )$, where $C$ is a set, $\circ$ a partially defined binary operation $C\times C \rightarrow C$, and $+, -$ are unary maps $C\rightarrow C$, is a (small) \emph{category} if it satisfies the axioms 
\begin{subequations} \label{eq:cat}
\begin{align}
  &x^{+}\circ x = x = x\circ x^{-};\\
  & (x^{+})^{-} = x^{+}, (x^{-})^{+} = x^{-};\\
   & x\circ y \text{   is defined precisely when }x^{-} = y^{+};\\
 &  \text{   when  } x^{-} = y^{+}, \hspace{6pt} (x\circ y)^{+} = x^{+} \text{      and     } (x\circ y)^{-} = y^{-};\\
 & (x\circ y)\circ z = x\circ (y\circ z) \text{   whenever the products are defined}.
  \end{align}
\end{subequations}
The set $C^{+} = \{e \in C\colon e=e^{+}\} = C^{-}$ is the set of \emph{identities} or \emph{objects}.  Axiom (\ref{eq:cat}b) implies 
$(x^{+})^{+} =  ((x^{+})^{-})^{+}  = (x^{+})^{-} = x^{+}$ and similarly or dually, $(x^{-})^{-} = x^{-}$.
For $e\in C^{+}$ and $x\in C$, 
$e\circ x$ is defined if and only if $x^{+} = e^{-} = e$ and then 
$e\circ x = x$; and a dual statement holds for $x\circ e$. 
  
\section {Transcription categories}
A \emph{transcription category} is a category as above, endowed with two more maps $C\times C^{+} \rightarrow C$, called \emph{transcription maps} and written as follows: for $e,f \in C^{+}$ and $x\in C$, 
$_{e} | x , x|_{f}$ denote arrows (members of $C$) satisfying the axioms below, which are based on those for ordered or inductive categories.  Unlike some other studies, we do not impose any order conditions on the objects to which $x\in C$ may be `restricted': this is partly why we have used the term `transcription'.
\begin{subequations} \label{eq:skewcat}
\begin{align}
&  _{e}|f = e|_{f};\\
 & _{x^{+}}|x = x = x|_{x^{-}}; \\
&  _{e}|(_{f}|x) =\; _{(_{e}|f)}|x \text{    and its dual    } x|_{(e|_{f})} = (x|_{e})|_{f};\\
&  _{e}|(x\circ y) = (_{e}|x)\circ(_{(_{e}|x)^{-}}|y)\text {    and its dual    } (x\circ y)|_{f} = (x|_{(y|_{f})^{+}})\circ y|_{f};\\
 & (_{e}|x)^{+} =\; _{e}|x^{+} \text{    and its dual    } (x|_{f})^{-} = x^{-}|_{f};\\
 &  (_{e}|x)|_{f} = \; _{e}|(x|_{f}).
 \end{align}
\end{subequations}
Axiom (\ref{eq:skewcat}a) ensures the two maps agree on $C^{+}\times C^{+}$ and by (\ref{eq:skewcat}e) map $C^{+}\times C^{+}\rightarrow C^{+}$. 
Axiom (\ref{eq:skewcat}b) implies $_{e}|e= e$ and so with associativity from (\ref{eq:skewcat}c) we have  that $C^{+}$ is a band under the operation $| \,\colon\, (e,f)\mapsto e\vert_{f}$.  Where convenient, we may write $e|_{f}$ as $e|f$ because of (\ref{eq:skewcat}a).  The band $C^{+}$ acts on the set $C$ on both left and right by (\ref{eq:skewcat}c), interacting with the category operations  as per (\ref{eq:skewcat}d, e), and the two actions are linked by (\ref{eq:skewcat}f).
 
Observe that the natural-looking equation  
$  (_{e}|x)^{-} = \;_{(_{e}|x)^{-}}|x^{-} = (_{e}|x)^{-}|_{x^{-}}$  and its dual  $ (x|_{f})^{+} =  x^{+}|_{(x|_{f})^{+}} = \;_{x^{+}}|(x|_{f})^{+}$ 
are consequences of certain of these axioms: put $y=x^{-}$ in (\ref{eq:skewcat}d), so the compositions are defined and  $_{e}\vert x = {_{e}}\vert (x\circ x^{-}) {=}(_{e}\vert x)\circ (_{(_{e}\vert x)^{-}}\vert x^{-})$, whence 
\begin{equation}\label{oddone}
(_{e}\vert x)^{-} \overset{\ref{eq:cat}d}{=} (_{(_{e}\vert x)^{-}}\vert x^{-})^{-}
\overset{\ref{eq:skewcat}a}{=}{} (_{e}\vert x)^{-}\vert _{x^{-}}. 
\end{equation}

Given a transcription category $(C, \circ, +,-)$ or $C$ for short, we define a \emph{pseudoproduct} $\otimes$ as follows.  For $x,y \in C$, define 
\begin{equation}\label{defotimes}
 x\otimes y = (x|_{y^{+}})\circ (_{x^{-}}|y), 
\end{equation}
the right hand side being defined in ${C}$ since 
$$(x|_{y^{+}})^{-} =x^{-}|_{y^{+}} = \:_{x^{-}}|y^{+} = (\;_{x^{-}}|y)^{+}$$
by (\ref{eq:skewcat}e) and (\ref{eq:skewcat}a).  
The following properties will be required.  Note that $\otimes$ extends both $\circ$ and the restriction operations $|$ when they are defined.
\begin{lemma}\label{lem}
\begin{enumerate}[i)]
\item If $x\circ y$ is defined then  $x\otimes y = x\circ y$; 
\item if $e\in C^{+}$ and $x\in C$,  $e\otimes x = \,_{e}|x$ and  $x\otimes e = x|_{e}$;  
\item $(x\otimes y)^{+} = (x|_{y^{+}})^{+}$ and $(x\otimes y)^{-} = (_{x^{-}}|y)^{-}$;
\item if $e\in C^{+}$ and $x,y \in C$, then $\,_{e}|(x\otimes y) = (\,_{e}|x)\otimes y$;
\item $x\otimes y = (x|_{x^{-}y^{+}})\circ(\,_{x^{-}y^{+}}|y)$.
\end{enumerate}
 \end{lemma}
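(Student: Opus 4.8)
The plan is to read (i)--(iii) straight off the axioms, derive (v) by two short rewrites, and concentrate the real effort on (iv), which I expect to collapse to a single band identity.

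For (i), if $x\circ y$ is defined then $x^{-}=y^{+}$ by (\ref{eq:cat}c), so $x|_{y^{+}}=x|_{x^{-}}=x$ and $_{x^{-}}|y=\,_{y^{+}}|y=y$ by (\ref{eq:skewcat}b), whence $x\otimes y=x\circ y$ from (\ref{defotimes}). For (ii), since $e^{-}=e$, axioms (\ref{eq:skewcat}a) and (\ref{eq:skewcat}e) give $e|_{x^{+}}=\,_{e}|x^{+}=(_{e}|x)^{+}$, so $e\otimes x=(_{e}|x)^{+}\circ(_{e}|x)=\,_{e}|x$ by (\ref{eq:cat}a); the identity $x\otimes e=x|_{e}$ is dual. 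For (iii), the product defining $x\otimes y$ in (\ref{defotimes}) is composable, so (\ref{eq:cat}d) yields $(x\otimes y)^{+}=(x|_{y^{+}})^{+}$ and $(x\otimes y)^{-}=(_{x^{-}}|y)^{-}$ at once. For (v), writing $x^{-}y^{+}$ for the band product $x^{-}|_{y^{+}}=\,_{x^{-}}|y^{+}$ on $C^{+}$, the dual of (\ref{eq:skewcat}c) with (\ref{eq:skewcat}b) gives $x|_{x^{-}y^{+}}=(x|_{x^{-}})|_{y^{+}}=x|_{y^{+}}$, and (\ref{eq:skewcat}a),(\ref{eq:skewcat}c),(\ref{eq:skewcat}b) give $_{x^{-}y^{+}}|y=\,_{x^{-}}|(_{y^{+}}|y)=\,_{x^{-}}|y$; substituting these into the right-hand side of (v) reproduces (\ref{defotimes}).

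The substantive step is (iv). By (ii) it is equivalent to the special associativity $e\otimes(x\otimes y)=(e\otimes x)\otimes y$. I would expand $_{e}|(x\otimes y)$ by applying (\ref{eq:skewcat}d) to the composite $(x|_{y^{+}})\circ(_{x^{-}}|y)$ and simplify the left factor by (\ref{eq:skewcat}f) to $(_{e}|x)|_{y^{+}}$, which is exactly the left factor of $(_{e}|x)\otimes y$. Abbreviating $a=(_{e}|x)^{-}$ and pushing the resulting index through (\ref{eq:skewcat}e),(\ref{eq:skewcat}a),(\ref{eq:skewcat}c), the whole identity reduces to equality of the two right factors, namely
\[
_{(ay^{+}x^{-})}|y=\,_{a}|y .
\]
Now (\ref{eq:skewcat}b),(\ref{eq:skewcat}c) show that $_{g}|y=\,_{(gy^{+})}|y$ depends on $g$ only through $gy^{+}$, so both sides are $_{(ay^{+})}|y$ as soon as I verify the band equation $ay^{+}x^{-}y^{+}=ay^{+}$.

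The main obstacle is precisely that band identity, which must hold in an arbitrary (non-commutative) band. The key input is supplied by (\ref{oddone}): taking $a=(_{e}|x)^{-}$ gives $a=a|_{x^{-}}=\,_{a}|x^{-}$, i.e. $ax^{-}=a$ in $C^{+}$. From this I expect to compute directly, using only idempotency of the element $x^{-}y^{+}$ of the band,
\[
ay^{+}x^{-}y^{+}=ax^{-}y^{+}x^{-}y^{+}=a(x^{-}y^{+})(x^{-}y^{+})=ax^{-}y^{+}=ay^{+},
\]
where the first and last equalities use $ax^{-}=a$. Feeding this back shows the two right factors coincide and completes (iv). The only points needing care are the bookkeeping between left- and right-restrictions and the repeated appeals to (\ref{eq:skewcat}a) to pass between $_{e}|f$ and $e|_{f}$ on $C^{+}$.
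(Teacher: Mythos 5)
Your proposal is correct and follows essentially the same route as the paper's proof: parts (i)--(iii) and (v) read off the axioms as you describe, and for (iv) the paper likewise expands $_{e}|(x\otimes y)$ via (\ref{eq:skewcat}d), matches the left factors, and reduces the right-factor index by exactly the combination you use, namely the absorption $_{g}|y=\,_{(gy^{+})}|y$, the idempotency of $x^{-}y^{+}$ in the band $C^{+}$, and the identity (\ref{oddone}) giving $(_{e}|x)^{-}x^{-}=(_{e}|x)^{-}$. No gaps.
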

\begin{proof}
(i) If $x^{-} = y^{+}$, then $x|_{y^{+}} = x|_{x^{-}} = x$,  by (\ref{eq:skewcat}b), and dually $_{x^{-}}|y = y$; the result follows from the definition in equation (\ref{defotimes}).  

(ii) We have $e\otimes x \overset{\text{def.}}{=} (e|_{x^{+}})\circ(\, _{e^{-}}|x) \overset{\ref{eq:skewcat}a}{=} (\,_{e}|x^{+})\circ(\,_{e}|x)  \overset{\ref{eq:skewcat}e}{=} (\,_{e}|x)^{+} \circ (\,_{e}|x) = (\,_{e}|x)$, and dually.

(iii) This comes from the definition and (\ref{eq:cat}d).

(iv) The left-hand expression is $\,_{e}|(x\otimes y) = \,_{e}|((x|_{y^{+}})\circ (\,_{x^{-}}|y)) \overset{\ref{eq:skewcat}d}{=} (\,_{e}|x|_{y^{+}})\circ (_{(\,_{e}|x|_{y^{+}})^{-}}|(\,_{x^{-}}|y))$,  where the second factor simplifies, using (\ref{eq:skewcat}c), as follows: $$\,_{(\,_{e}|x)^{-}x^{-}y^{+}x^{-}y^{+}}|y  \overset{\text{band}}{=}    \,_{(\,_{e}|x)^{-}x^{-}y^{+}}|y =   \,_{(\,_{e}|x)^{-}x^{-}}|y \overset{\ref{oddone}} {=} \,_{(\,_{e}|x)^{-}}|y .$$
The right-hand expression is $(\,_{e}|x)|_{y^{+}} \circ (\,_{(\,_{e}|x)^{-}}|y)$, and the two are equal.

(v) It is enough to note that $x|_{y^{+}} = (x|_{x^{-}})|_{y^{+}}$, etc., and use axiom (\ref{eq:skewcat}c) and the definition (\ref{defotimes}).
\end{proof}
\begin{theorem} \label{is sgp}
 If ${C}$ is a transcription category then $(C,\otimes)$ is a semigroup.
\end{theorem}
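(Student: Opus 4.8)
The operation $\otimes$ is everywhere defined on $C$ (this is exactly the point verified just after (\ref{defotimes})), so $(C,\otimes)$ is automatically a closed binary structure and it remains only to prove that $\otimes$ is associative. The plan is to expand each of $(x\otimes y)\otimes z$ and $x\otimes(y\otimes z)$ into a single threefold $\circ$-composite of restricted arrows, and then to compare the two composites factor by factor. The guiding principle is to peel off the \emph{outermost} restriction from each pseudoproduct using parts (iii) and (iv) of Lemma~\ref{lem}, rather than to expand the defining formula (\ref{defotimes}) wholesale; the latter route immediately entangles non-commuting products of band elements, whereas the former keeps every factor in a recognisable shape.

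For the right-bracketed product, start from $x\otimes(y\otimes z)=(x|_{(y\otimes z)^{+}})\circ(\,_{x^{-}}|(y\otimes z))$. By Lemma~\ref{lem}(iii) the first subscript is $(y\otimes z)^{+}=(y|_{z^{+}})^{+}$, and by Lemma~\ref{lem}(iv) the second factor rewrites as $\,_{x^{-}}|(y\otimes z)=(\,_{x^{-}}|y)\otimes z$; expanding this last pseudoproduct by (\ref{defotimes}) gives
$$x\otimes(y\otimes z)=(x|_{(y|_{z^{+}})^{+}})\circ((\,_{x^{-}}|y)|_{z^{+}})\circ(\,_{(\,_{x^{-}}|y)^{-}}|z).$$
For the left-bracketed product, start from $(x\otimes y)\otimes z=((x\otimes y)|_{z^{+}})\circ(\,_{(x\otimes y)^{-}}|z)$. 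Here I would invoke the dual of Lemma~\ref{lem}(iv), namely $(x\otimes y)|_{f}=x\otimes(y|_{f})$, to rewrite the first factor as $x\otimes(y|_{z^{+}})$, while Lemma~\ref{lem}(iii) gives $(x\otimes y)^{-}=(\,_{x^{-}}|y)^{-}$ for the subscript of the last factor; expanding $x\otimes(y|_{z^{+}})$ by (\ref{defotimes}) gives
$$(x\otimes y)\otimes z=(x|_{(y|_{z^{+}})^{+}})\circ(\,_{x^{-}}|(y|_{z^{+}}))\circ(\,_{(\,_{x^{-}}|y)^{-}}|z).$$
The first and third factors of the two composites now coincide verbatim, and the two middle factors $(\,_{x^{-}}|y)|_{z^{+}}$ and $\,_{x^{-}}|(y|_{z^{+}})$ are equal by the linking axiom (\ref{eq:skewcat}f). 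Associativity of $\circ$ then finishes the argument.

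The one step that needs care—and where I expect the only real work to sit—is the appeal to the dual $(x\otimes y)|_{f}=x\otimes(y|_{f})$ of Lemma~\ref{lem}(iv). I would justify it by the evident left--right symmetry of the axioms (\ref{eq:skewcat}): passing to the opposite system, in which $\circ$ is reversed, $+$ and $-$ are interchanged, and the two transcription maps are swapped, again yields a transcription category, and a one-line check shows that its pseudoproduct sends $(x,y)\mapsto y\otimes x$. Applying Lemma~\ref{lem}(iv) in that opposite category and translating back produces exactly the required dual identity, so no fresh computation is needed. It is worth flagging that the whole proof pivots on axiom (\ref{eq:skewcat}f): this is the sole axiom binding the left and right actions of the band $C^{+}$ together, and it is precisely what reconciles the two middle factors once the outer restrictions have been matched by Lemma~\ref{lem}(iii) and (iv).
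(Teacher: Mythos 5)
Your proof is correct, but it is organised quite differently from the paper's. The paper first establishes the special case $(x\circ y)\otimes z=x\otimes(y\otimes z)$ for composable $x,y$, then reduces the general left-bracketed product to $x|_{y^{+}}\otimes\;_{x^{-}}|(y\otimes z)$ via Lemma~\ref{lem}(iv), and finally identifies this with $x\otimes(y\otimes z)$ by a fairly lengthy manipulation of band elements using (\ref{eq:skewcat}c), (\ref{eq:skewcat}e), equation (\ref{oddone}) and idempotency in $C^{+}$; the linking axiom (\ref{eq:skewcat}f) is not invoked there at all. You instead expand both bracketings into threefold $\circ$-composites whose first and third factors agree verbatim, so that the whole of associativity reduces to a single application of (\ref{eq:skewcat}f) to the middle factors. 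The price is the dual identity $(x\otimes y)|_{f}=x\otimes(y|_{f})$ of Lemma~\ref{lem}(iv), which the paper never states; your justification via the opposite transcription category is sound, since the axiom systems (\ref{eq:cat}) and (\ref{eq:skewcat}) are manifestly closed under the duality you describe and the opposite pseudoproduct is $(x,y)\mapsto y\otimes x$ (the paper itself appeals to such duality freely, e.g.\ in the proof of Theorem~\ref{th1}). Your route is shorter, symmetric in the two bracketings, and makes transparent that (\ref{eq:skewcat}f) is the axiom doing the work; the paper's route stays within the stated form of Lemma~\ref{lem} at the cost of the closing band computation. One small point worth making explicit in a final write-up is that the threefold composites are actually defined (each adjacent pair satisfies the source--target condition), which follows from (\ref{eq:cat}d) together with the definedness of the twofold pseudoproducts you expand.
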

\begin{proof}
We have to show associativity of $\otimes$.  
 First, for any $x,y,z\in C$ such that $x\circ y$ is defined,
\begin{align*}
& (x\circ y)\otimes z   &\overset{\ref{defotimes}}{=} & &(x\circ y)|_{z^{+}} \circ (_{(x\circ y)^{-}}| z)  &&\overset{\ref{eq:skewcat}d}{=}& & (x|_{(y|_{z^{+}})^{+}}\circ y|_{z^{+}}) \circ (_{(x\circ y)^{-}}| z)\\
&&\overset{\ref{eq:cat}e}{=} &&x|_{(y|_{z^{+}})^{+}}\circ (y|_{z^{+}} \circ \:_{(x\circ y)^{-}}| z) && \overset{\ref{eq:cat}d}{=} & &x|_{(y|_{z^{+}})^{+}}\circ (y|_{z^{+}} \circ \:_{y^{-}}| z) \\
&&\overset{\ref{defotimes}}{=}&& x|_{(y|_{z^{+}})^{+}}\circ (y\otimes z) && = &&x\otimes (y\otimes z),
\end{align*}
since $x^{-} = y^{+}$ and $(y\otimes z)^{+} = (y|_{z^{+}})^{+}$ by Lemma \ref{lem}(iii).
Next, in the general case we have 
\begin{align*}
&(x\otimes y)\otimes z &\overset{\ref{defotimes}}{=}& &(x|_{y^{+}} \circ \;_{x^{-}}|y) \otimes z &\overset{\text{above}}{=}& & x|_{y^{+}}\otimes ((\,_{x^{-}}|y)\otimes z) \overset{\text{Lem}\ref{lem}\text{(iv)}}{=}  x|_{y^{+}}\otimes \,_{x^{-}}|(y\otimes z) .
\end{align*}
Consider the last expression above: we know $(x|_{y^{+}})^{-} = x^{-}|y^{+}$ by (\ref{eq:skewcat}e) and $$(\,_{x^{-}}|y\otimes z)^{+} = x^{-}|(y\otimes z)^{+} = x^{-}|(y|_{z^{+}})^{+}$$ by Lemma \ref{lem}(iii), so 
\begin{align*}
 x|_{y^{+}}\otimes \,_{x^{-}}|(y\otimes z) && {=} && ((x|_{y^{+}})|_{x^{-}|(y|_{z^{+}})^{+}})\circ (\,_{x^{-}|y^{+}|x^{-}}|(y\otimes z)).
\end{align*}
Now the left factor is
\begin{align*}
&&(x|_{y^{+}})|_{x^{-}|(y|_{z^{+}})^{+}} && = && x|_{(x^{-}|y^{+})(x^{-}|y^{+})|(y|_{z^{+}})^{+}}&& = && x|_{(x^{-}|y^{+})(y|_{z^{+}})^{+}} \\
=&& x|_{x^{-}|(y^{+}|(y|_{z^{+}})^{+})} &&=&& x|_{(y|_{z^{+}})^{+}}&&=&&x|_{(y\otimes z)^{+}} ,
\end{align*}
 the right factor is
\begin{align*}
 \,_{x^{-}|y^{+}|x^{-}|y^{+}}|(y\otimes z) &&=&& \,_{x^{-}|y^{+}}|(y\otimes z)  &&=&& \,_{x^{-}}|(\,_{y^{+}}|y\otimes z)&&=&&\,_{x^{-}}|(y\otimes z),
  \end{align*}
and finally we have $(x\otimes y)\otimes z = x\otimes (y\otimes z)$.

\end{proof}
The semigroup $(C,\otimes)$ is denoted $\mathscr{S}(C)$ and has extra structure, which we proceed to explore.

\section{localisable semigroups}
A \emph{unary semigroup} is an algebra $(S,\cdot, +)$ of signature $(2,1)$ such that $(S,\cdot)$ is a semigroup, that is, $x\cdot(y\cdot z) = (x\cdot y)\cdot z$,  together with a unary map $+\colon S\rightarrow S$. 
A \emph{left localisable semigroup} is a unary semigroup  
such that the unary map $+\colon S\rightarrow S$ satisfies
\begin{subequations} \label{eq:locsgp}
\begin{align}
 x^{+}\cdot x &= x , \\ 
  (x\cdot y)^{+} &= (x\cdot y^{+})^{+} ,  \text{     and} \\ 
  x^{+}\cdot y^{+} &= (x^{+}\cdot y)^{+}.   
\end{align}

A \emph{right localisable} semigroup is an algebra $(S,\cdot, -)$  such that $(S^{\rm opp},\cdot, -)$ is a left localisable semigroup; that is, $(S,\cdot, -)$ satisfies  the identities 
\begin{align}
   x\cdot x^{-} &= x , \\
  (x\cdot y)^{-} &=( x^{-}\cdot y)^{-},  \text{     and}\\
  x^{-}\cdot y^{-} &= (x\cdot y^{-})^{-} .
 \end{align}
A \emph{localisable} semigroup is an algebra $(S,\cdot, +, - )$ of signature $(2,1, 1)$ such that $(S,\cdot, +)$ is a left localisable and $(S,\cdot, -)$ a right localisable semigroup, and the unary operations are linked by the identities
\begin{align}
 & (x^{+})^{-} = x^{+} \text{  and }  (x^{-})^{+} = x^{-} .
\end{align}
 \end{subequations}
 
 These axioms are based on those for restriction semigroups, as well as the needs of the Theorems to follow.  A more detailed comparison with restriction semigroups and one-sided variants is given in a later Section.  For future use, let us observe that these axioms occur in dual pairs; to reduce repetition, we will often leave dual statements implicit.    

 Let $S^{+} = \{x^{+}: x\in S\}$, and note  that (\ref{eq:locsgp}g)
  implies  $S^{-} = \{x^{-}\colon x\in S\} = S^{+}$ and also $(x^{+})^{+} = ((x^{+})^{-})^{+} = (x^{+})^{-} = x^{+}$.   As in restriction semigroups, members of $S^{+}$ are called \emph{projections}. 
  The next lemma records some useful relationships.   
\begin{lemma}\label{band}
 Let $(S, \cdot, +)$ be a unary semigroup.  Then 
 \begin{enumerate}[i)]
 \item (\ref{eq:locsgp}a) and c) imply $(S^{+},\cdot)$ is a band;
 \item (\ref{eq:locsgp}a), b) and c) imply $(x^{+})^{+} = x^{+}$; and
 \item if $S^{+}$ is a band, (\ref{eq:locsgp}b) implies  (\ref{eq:locsgp}c).
 \end{enumerate}
 \end{lemma}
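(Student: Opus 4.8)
The plan is to dispatch the three implications by short reductions to the unary-semigroup axioms, reusing each part in the next where possible.

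For (i) I would argue closure and idempotency separately. Closure is immediate from (\ref{eq:locsgp}c): for $e=x^{+}$ and $f=y^{+}$ one has $ef=x^{+}y^{+}=(x^{+}y)^{+}\in S^{+}$, so $(S^{+},\cdot)$ is a subsemigroup. For idempotency, apply (\ref{eq:locsgp}c) with $y=x$ and then (\ref{eq:locsgp}a), giving $x^{+}x^{+}=(x^{+}x)^{+}=x^{+}$; hence every element of $S^{+}$ is idempotent and $(S^{+},\cdot)$ is a band. For (ii) I would start from (\ref{eq:locsgp}a) written as $x=x^{+}x$, apply $+$ to obtain $x^{+}=(x^{+}x)^{+}$, rewrite the right-hand side by (\ref{eq:locsgp}b) as $(x^{+}x^{+})^{+}$, and collapse $x^{+}x^{+}$ to $x^{+}$ using the idempotency just established in (i). The chain $x^{+}=(x^{+}x)^{+}=(x^{+}x^{+})^{+}=(x^{+})^{+}$ then gives $(x^{+})^{+}=x^{+}$.

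For (iii) the key move is to rewrite the target (\ref{eq:locsgp}c). Applying (\ref{eq:locsgp}b) with first variable $x^{+}$ and second variable $y$ yields $(x^{+}y)^{+}=(x^{+}y^{+})^{+}$, so it suffices to prove $(x^{+}y^{+})^{+}=x^{+}y^{+}$. The band hypothesis places $x^{+}y^{+}$ in $S^{+}$, after which the only thing left is that $+$ acts as the identity on $S^{+}$, i.e. $(z^{+})^{+}=z^{+}$ for $z^{+}\in S^{+}$; granting that, $(x^{+}y^{+})^{+}=x^{+}y^{+}$ and (\ref{eq:locsgp}c) follows.

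The main obstacle I anticipate is exactly this fixed-point step in (iii). Mere membership in $S^{+}$ does not force $+$ to fix the element: on the two-element left-zero band with $+$ interchanging its two elements, (\ref{eq:locsgp}b) holds vacuously while (\ref{eq:locsgp}c) fails, so a literally bare reading of ``$S^{+}$ is a band'' would be insufficient. I therefore expect the hypothesis to be understood as asserting that $S^{+}$ is a genuine band of projections, carrying $(x^{+})^{+}=x^{+}$; with that in hand the identity action of $+$ on $S^{+}$ closes the argument. If one wished to avoid this reading, the natural remedy would be to add the condition forcing $+$ to fix $S^{+}$, and the remainder of the computation would go through unchanged.
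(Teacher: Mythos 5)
Your arguments for parts (i) and (ii) are exactly the paper's: closure and idempotency of $S^{+}$ come from (\ref{eq:locsgp}c) together with (\ref{eq:locsgp}a), and (ii) is the same chain $x^{+}=(x^{+}x)^{+}=(x^{+}x^{+})^{+}=(x^{+})^{+}$. For part (iii) you also follow the paper's route---substitute $x^{+}$ for $x$ in (\ref{eq:locsgp}b) and reduce to showing $(x^{+}y^{+})^{+}=x^{+}y^{+}$---but the obstacle you flag at that point is genuine, and it is a gap in the paper's own proof rather than in yours: the paper simply asserts $(x^{+}y^{+})^{+}=x^{+}y^{+}$ ``by the band condition,'' and membership of $x^{+}y^{+}$ in a band $S^{+}$ does not force ${}^{+}$ to fix it. Your counterexample is correct: on the two-element left-zero semigroup $\{a,b\}$ with $xy=x$, $a^{+}=b$, $b^{+}=a$, the set $S^{+}=S$ is a band and (\ref{eq:locsgp}b) holds because both sides reduce to $x^{+}$, yet (\ref{eq:locsgp}c) fails at $x=a$ since $a^{+}b^{+}=b$ while $(a^{+}b)^{+}=a$. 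So part (iii) as literally stated requires the extra hypothesis that ${}^{+}$ fixes $S^{+}$ pointwise, exactly as you propose; with that in hand your computation (and the paper's) closes. It is worth noting that in the one place the lemma's part (iii) is actually invoked, namely Lemma \ref{axequiv}(ii), axiom (\ref{eq:locsgp}g) is among the standing hypotheses and supplies the missing identity via $(x^{+})^{+}=((x^{+})^{-})^{+}=(x^{+})^{-}=x^{+}$, so the gap is harmless in context---but your reading of the hypothesis is the correct repair of the lemma as a free-standing statement.
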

\begin{proof}
\begin{enumerate}[i)]
\item  (\ref{eq:locsgp}c) shows that $x^{+}\cdot y^{+}\in S^{+}$; it also implies  $x^{+}\cdot x^{+} = (x^{+}\cdot x)^{+}$ which $ = x^{+}$ by (\ref{eq:locsgp}a). 
\item Now from (\ref{eq:locsgp}b) we have $y^{+} = (y^{+}\cdot y^{+})^{+} = (y^{+})^{+}$.  
\item Set $x^{+}$ for $x$  in (\ref{eq:locsgp}b) and use $(x^{+}\cdot y^{+})^{+} =  x^{+}\cdot y^{+}$ by the band condition.  
 \end{enumerate}
 \end{proof} 
This lemma also indicates that it is a natural and interesting condition that $S^{+}$ be a band (and not necessarily a semilattice). 
 It is of further  interest to point out the following relations between axioms which involve both unary operations in (\ref{eq:locsgp}).  
\begin{lemma} \label{axequiv}
 Let $(S,\cdot, +,-)$ be a semigroup with two unary operations in which  (\ref{eq:locsgp}a), d) and g) hold.   Then
\begin{enumerate}[i)]
\item  (\ref{eq:locsgp}c) and f) together are equivalent to
\begin{equation} \label{eq:altax}
(x\cdot y^{+})^{-} = x^{-}\cdot y^{+} = (x^{-}\cdot y)^{+} ;
 \text{ and} 
\end{equation}
\item if $S^{+}$ is a band, (\ref{eq:locsgp}b) and e) together imply  (\ref{eq:altax}). 
\end{enumerate}
\end{lemma}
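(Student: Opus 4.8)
The plan is to observe that (\ref{eq:altax}) carries no new content beyond (\ref{eq:locsgp}c) and (\ref{eq:locsgp}f): each of its two equalities is one of those axioms with an outer variable specialised to a projection, and the standing hypothesis (\ref{eq:locsgp}g)---that applying the ``wrong'' unary operation to a projection fixes it---is exactly what makes this specialisation reversible. So the whole lemma reduces to bookkeeping of substitutions, with part (ii) handed off to part (i) via the earlier band lemma.

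For the forward direction of (i), assume (\ref{eq:locsgp}c) and (\ref{eq:locsgp}f). First I would substitute $y^{+}$ for $y$ in (\ref{eq:locsgp}f); since $(y^{+})^{-} = y^{+}$ by (\ref{eq:locsgp}g), this yields $(x\cdot y^{+})^{-} = x^{-}\cdot y^{+}$, the first equality of (\ref{eq:altax}). Next I would substitute $x^{-}$ for $x$ in (\ref{eq:locsgp}c); since $(x^{-})^{+} = x^{-}$ by (\ref{eq:locsgp}g), this yields $x^{-}\cdot y^{+} = (x^{-}\cdot y)^{+}$, the second equality. For the converse I run the same substitutions backwards: putting $x^{+}$ for $x$ in the second equality of (\ref{eq:altax}) and using $(x^{+})^{-} = x^{+}$ recovers (\ref{eq:locsgp}c), while putting $y^{-}$ for $y$ in the first equality and using $(y^{-})^{+} = y^{-}$ recovers (\ref{eq:locsgp}f). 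This closes the equivalence in (i).

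For (ii) I would not argue directly but reduce to (i). With $S^{+}$ a band, Lemma \ref{band}(iii) shows that (\ref{eq:locsgp}b) implies (\ref{eq:locsgp}c); applying the left--right dual of that lemma---valid because $S^{-} = S^{+}$ is equally a band---shows that (\ref{eq:locsgp}e) implies (\ref{eq:locsgp}f). Since (\ref{eq:locsgp}a), (\ref{eq:locsgp}d) and (\ref{eq:locsgp}g) are assumed throughout, part (i) then delivers (\ref{eq:altax}).

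There is no serious obstacle: the computations are entirely routine. The only things to keep straight are the pairing of each equality of (\ref{eq:altax}) with the axiom it specialises (the first with (\ref{eq:locsgp}f), the second with (\ref{eq:locsgp}c)), and which clause of (\ref{eq:locsgp}g) cancels which unary operation at each substitution. The sole genuinely load-bearing external input is Lemma \ref{band}(iii) together with its dual, which is precisely where the band hypothesis of (ii) is used---underscoring that the band condition is what lets the weaker axioms (\ref{eq:locsgp}b), (\ref{eq:locsgp}e) stand in for (\ref{eq:locsgp}c), (\ref{eq:locsgp}f).
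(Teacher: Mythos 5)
Your proof is correct and follows essentially the same route as the paper: the same substitutions ($y^{+}$ for $y$ in (\ref{eq:locsgp}f), $x^{-}$ for $x$ in (\ref{eq:locsgp}c), and their reversals using (\ref{eq:locsgp}g)) for part (i), and the reduction of part (ii) to part (i) via Lemma \ref{band} and its dual. The only discrepancy is that the paper's text cites Lemma \ref{band}(ii) at that point, which appears to be a typo for part (iii); your reference is the correct one.
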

\begin{proof}
\begin{enumerate}[i)]  
\item Set $y^{+}$ in place of $y$ in (\ref{eq:locsgp}f) and use (\ref{eq:locsgp}g) to obtain the first equation of (\ref{eq:altax}).  For the second (and dual) equation of (\ref{eq:altax}), put $x^{-}$ for $x$ in (\ref{eq:locsgp}c). 
 In the converse direction, put $y^{-}$ for $y$ in the first equation of (\ref{eq:altax}) to get 
 $(x\cdot (y^{-})^{+})^{-} = x^{-}\cdot (y^{-})^{+}$, which is  (\ref{eq:locsgp}f) by (\ref{eq:locsgp}g); and  put $x^{+}$ for $x$ in the second equation of (\ref{eq:altax}) to get the dual (\ref{eq:locsgp}c). 
\item  By Lemma \ref{band}(ii) and its dual, (\ref{eq:locsgp}c) and f) hold.  By part i), (\ref{eq:altax}) follows.
\end{enumerate}
\end{proof}
\begin{lemma}\label{handy}
Let $(S,\cdot,+,-)$ be a localisable semigroup. Then for all $x,y\in S$,
\begin{enumerate}[i)]
 \item $x\cdot y = x\cdot y^{+}\cdot x^{-}\cdot y$ and
 \item $(x^{+}\cdot y)^{-}\cdot y^{-} = (x^{+}\cdot y)^{-}$.
\end{enumerate}
\end{lemma}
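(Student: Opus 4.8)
Both parts are short, direct computations from the defining identities (\ref{eq:locsgp}), and I would prove them independently. Throughout I would freely use associativity and the already-established consequences collected in Lemma \ref{axequiv}.

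For (i) the plan is to start from the left-hand side and insert a projection. Since $y = y^{+}\cdot y$ by (\ref{eq:locsgp}a), associativity gives $x\cdot y = (x\cdot y^{+})\cdot y$, so it suffices to prove the absorption identity $x\cdot y^{+} = x\cdot y^{+}\cdot x^{-}\cdot y^{+}$: right-multiplying this by $y$ and using $y^{+}\cdot y = y$ once more then yields the claim. To obtain the absorption identity I would set $w = x\cdot y^{+}$ and apply the right-reduction axiom (\ref{eq:locsgp}d) in the form $w = w\cdot w^{-}$. The point is to identify $w^{-}$: by the first equation of (\ref{eq:altax}) (equivalently, (\ref{eq:locsgp}f) together with $(y^{+})^{-} = y^{+}$ from (\ref{eq:locsgp}g)) we have $w^{-} = (x\cdot y^{+})^{-} = x^{-}\cdot y^{+}$. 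Substituting gives exactly $w = (x\cdot y^{+})\cdot(x^{-}\cdot y^{+}) = x\cdot y^{+}\cdot x^{-}\cdot y^{+}$, completing the reduction.

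For (ii) the cleanest route is to read axiom (\ref{eq:locsgp}f) backwards. Instantiating its two arguments as $x^{+}\cdot y$ and $y$ respectively yields $(x^{+}\cdot y)^{-}\cdot y^{-} = \bigl((x^{+}\cdot y)\cdot y^{-}\bigr)^{-}$. Inside the outer $-$, associativity together with the right-reduction identity $y\cdot y^{-} = y$ from (\ref{eq:locsgp}d) collapses $(x^{+}\cdot y)\cdot y^{-} = x^{+}\cdot(y\cdot y^{-}) = x^{+}\cdot y$, so the right-hand side is $(x^{+}\cdot y)^{-}$, which is precisely what is required.

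Since each part is a two- or three-step manipulation, I do not anticipate a genuine obstacle. The only things to watch are applying (\ref{eq:locsgp}f) in the correct direction and keeping track of which unary operation is being eliminated at each stage. The one mild subtlety, in part (i), is recognising that $(x\cdot y^{+})^{-} = x^{-}\cdot y^{+}$; this is not one of the raw axioms but is supplied by Lemma \ref{axequiv} via equation (\ref{eq:altax}), and it is the step I would flag explicitly rather than leave implicit.
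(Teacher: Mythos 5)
Your proof is correct and, for part (ii), identical to the paper's. For part (i) you arrive at the same intermediate expression $x\cdot y^{+}\cdot x^{-}\cdot y^{+}\cdot y$ as the paper, differing only in how the insertion of $x^{-}\cdot y^{+}$ is justified: you use $(x\cdot y^{+})^{-}=x^{-}\cdot y^{+}$ from Lemma \ref{axequiv} together with $w=w\cdot w^{-}$, whereas the paper squares the idempotent $x^{-}\cdot y^{+}$ via Lemma \ref{band}(i) --- both are sound one-line justifications of the same step.
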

\begin{proof}  
\begin{enumerate}[i)]
\item Now (\ref{eq:locsgp}a) and d), taken with Lemma \ref{band}(i), show that  
\begin{equation*} \label{eq:mid-id}
 x\cdot y = x\cdot x^{-}\cdot y^{+ }\cdot y = x\cdot (x^{-}\cdot y^{+})^{2}\cdot y = x\cdot x^{-}\cdot (y^{+}\cdot x^{-})\cdot y^{+}\cdot y = x\cdot y^{+}\cdot x^{-}\cdot y.
\end{equation*}
 \item Putting $x^{+}\cdot y$ for $x$ in (\ref{eq:locsgp}f) we have 
 \begin{equation*}
 (x^{+}\cdot y)^{-}\cdot y^{-} = ((x^{+}\cdot y)\cdot y^{-})^{-}
  = (x^{+}\cdot y)^{-},
\end{equation*}
\end{enumerate}
 as required.
 \end{proof}

\begin{theorem}\label{th1}
 If ${C}$ is a transcription category then $\mathscr{S}(C)$ is a localisable semigroup.
\end{theorem}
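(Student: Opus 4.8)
The plan is to verify each of the defining identities (\ref{eq:locsgp}a)--(g) directly for the unary semigroup $(C,\otimes,+,-)$, drawing on the facts already assembled in Lemma \ref{lem}. The crucial observation is that whenever one argument of $\otimes$ is a projection (an element of $C^{+}$), the pseudoproduct collapses to a transcription map: by Lemma \ref{lem}(ii), $e\otimes x = {}_{e}|x$ and $x\otimes e = x|_{e}$ for $e\in C^{+}$. Almost every axiom then reduces to a transcription-category identity already in hand.

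First, the linking axioms (\ref{eq:locsgp}g), namely $(x^{+})^{-} = x^{+}$ and $(x^{-})^{+} = x^{-}$, are precisely the category axiom (\ref{eq:cat}b) and require no further argument. For the retraction axioms I would compute $x^{+}\otimes x = {}_{x^{+}}|x = x$ and $x\otimes x^{-} = x|_{x^{-}} = x$, using Lemma \ref{lem}(ii) together with (\ref{eq:skewcat}b); these are (\ref{eq:locsgp}a) and its dual (\ref{eq:locsgp}d).

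Next come the two pairs of domain/range identities. For (\ref{eq:locsgp}b) I would use Lemma \ref{lem}(iii) to write $(x\otimes y)^{+} = (x|_{y^{+}})^{+}$, and then note that $x\otimes y^{+} = x|_{y^{+}}$ by Lemma \ref{lem}(ii), so that $(x\otimes y^{+})^{+} = (x|_{y^{+}})^{+}$ as well; the dual (\ref{eq:locsgp}e) follows from Lemma \ref{lem}(iii) on the $-$ side. For (\ref{eq:locsgp}c), both sides should reduce to ${}_{x^{+}}|y^{+}$: the left side is $x^{+}\otimes y^{+} = {}_{x^{+}}|y^{+}$ by Lemma \ref{lem}(ii), which lies in $C^{+}$ since $|$ maps $C^{+}\times C^{+}\to C^{+}$, while on the right $x^{+}\otimes y = {}_{x^{+}}|y$ and then $({}_{x^{+}}|y)^{+} = {}_{x^{+}}|y^{+}$ by (\ref{eq:skewcat}e); the dual (\ref{eq:locsgp}f) is handled symmetrically, with (\ref{eq:skewcat}a) used to reconcile the two transcription notations. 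Alternatively, once (\ref{eq:locsgp}a), d), g) and the pair (\ref{eq:locsgp}b), e) are established and $C^{+}$ is known to be a band, Lemma \ref{axequiv}(ii) yields (\ref{eq:altax}) and hence (\ref{eq:locsgp}c), f) at a stroke.

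I do not anticipate a serious obstacle: once Lemma \ref{lem}(ii)--(iii) are invoked, each axiom becomes a one- or two-line reduction to an identity of the transcription category. The only points demanding care are the bookkeeping between the two transcription notations ${}_{e}|x$ and $x|_{f}$ via (\ref{eq:skewcat}a), keeping track of which argument is the projection so that Lemma \ref{lem}(ii) applies on the correct side, and discharging each dual statement by the built-in left/right symmetry of the axioms rather than re-deriving it.
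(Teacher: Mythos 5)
Your proposal is correct and takes essentially the same route as the paper: both verify (\ref{eq:locsgp}a), b), c) and g) directly, reducing each to a transcription-category identity via the definition of $\otimes$ and Lemma \ref{lem}(ii)--(iii), and dispatch the dual axioms by the built-in left/right symmetry. The only thing missing is an explicit appeal to Theorem \ref{is sgp} for associativity of $\otimes$, which the definition of a localisable semigroup also requires.
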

\begin{proof}
Theorem \ref{is sgp} proved associativity of $\otimes$, and axiom (\ref{eq:locsgp}g) is (\ref{eq:cat}b). We must verify the remainder of axioms (\ref{eq:locsgp}), and it is enough to prove (\ref{eq:locsgp}a), b) and c), since the remainder are their duals.  For (\ref{eq:locsgp}a) we see 
$x^{+}\otimes x= (x^{+}|_{x^{+}})\circ(_{({x^{+}})^{-}}|x) = x^{+}\circ x = x$, using (\ref{eq:skewcat}b) and (\ref{eq:cat}a).   
For (\ref{eq:locsgp}b), axiom (\ref{eq:cat}d) gives
\begin{align*}
&(x\otimes y)^{+} &= && ((x|_{y^{+}})\circ (_{x^{-}}|y))^{+} &&= &&(x|_{y^{+}})^{+}, &&\text{     while}&\\
&(x\otimes y^{+})^{+} &= && ((x|_{y^{+}})\circ (_{x^{-}}|y^{+}))^{+} &&=&&((x|_{y^{+}})^{+}&&\text{     also}.&
\end{align*}
For (\ref{eq:locsgp}c),  
\begin{align*}
(x^{+}\otimes y)^{+}  
 = (x^{+}|_{y^{+}})^{+} = (x^{+}|_{y^{+}}) = x^{+}\otimes{y^{+}},
\end{align*}
using Lemma \ref{lem}.
\end{proof}
Given a localisable semigroup $(S,\cdot,+,-)$, define a 
 partial binary mapping or composition $\circ\colon S\times S\rightarrow S$, called the  \emph{trace product}, and transcription maps defined as follows.
 \begin{definition} \label{def:circ} 
(i)  The trace product  $x\circ y$ is defined exactly when $x^{-} = y^{+}$, and then  
$$x\circ y = x\cdot y .$$
(ii) Transcription maps for $x\in S, \, e,f\in S^{+}$ are defined by $\;_{e}|x = e\cdot x$ and $x|_{f} = x\cdot f$.
 \end{definition} 
Complementing Theorem \ref{th1} we have 
\begin{theorem}\label{th2}
If ${S}$ is a localisable semigroup, then $\mathscr{C}({S}) = (S,\circ ,+,-)$ is a transcription category.
\end{theorem}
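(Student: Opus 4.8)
The plan is to verify, one at a time, the five category axioms (\ref{eq:cat}) and then the six transcription axioms (\ref{eq:skewcat}), translating each into the semigroup language via $_e|x = e\cdot x$, $x|_f = x\cdot f$, and $x\circ y = x\cdot y$ (defined when $x^- = y^+$). Most will reduce to a single application of a localisable axiom or of associativity of $\cdot$, together with the standing facts that $S^+ = S^-$ is a band (Lemma \ref{band}) and that every projection $e$ satisfies $e = e^+ = e^-$ (from (\ref{eq:locsgp}g) and the remark after it). I would record this last point at the outset, since it is used repeatedly, in particular to identify the object set $C^+ = \{e : e = e^+\}$ with $S^+$.

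First the category axioms. Axiom (\ref{eq:cat}a) is immediate from (\ref{eq:locsgp}a) and its dual (\ref{eq:locsgp}d), noting $x^+\circ x$ and $x\circ x^-$ are defined because $(x^+)^- = x^+$ and $(x^-)^+ = x^-$. Axiom (\ref{eq:cat}b) is literally (\ref{eq:locsgp}g), and (\ref{eq:cat}c) holds by the definition of $\circ$. For (\ref{eq:cat}d), when $x^- = y^+$ I compute $(x\cdot y)^+ = (x\cdot y^+)^+ = (x\cdot x^-)^+ = x^+$ using (\ref{eq:locsgp}b) then (\ref{eq:locsgp}d), and dually $(x\cdot y)^- = y^-$. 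Associativity (\ref{eq:cat}e) then follows from associativity of $\cdot$, once one checks via (\ref{eq:cat}d) that $(x\circ y)\circ z$ is defined exactly when $x\circ(y\circ z)$ is, namely when $x^- = y^+$ and $y^- = z^+$.

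Next the transcription axioms. Axioms (\ref{eq:skewcat}a), (\ref{eq:skewcat}c) and (\ref{eq:skewcat}f) are pure associativity and band bookkeeping: each side unwinds to the same product $e\cdot f$, $e\cdot f\cdot x$ (resp. $x\cdot e\cdot f$), or $e\cdot x\cdot f$ in $S$, with the band property guaranteeing the intermediate subscripts lie in $S^+$. Axiom (\ref{eq:skewcat}b) is again (\ref{eq:locsgp}a) and (\ref{eq:locsgp}d). For (\ref{eq:skewcat}e) I would first isolate the key identity $(e\cdot x)^+ = e\cdot x^+$ for $e\in S^+$: since $e = e^+$, this is exactly (\ref{eq:locsgp}c) with the factor $x^+$ replaced by $e$, and the dual $(x\cdot f)^- = x^-\cdot f$ comes from (\ref{eq:locsgp}f) similarly.

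The real work, and the step I expect to be the main obstacle, is axiom (\ref{eq:skewcat}d), namely $_e|(x\circ y) = (_e|x)\circ(_{(_e|x)^-}|y)$, because here I must both show that the composite on the right is \emph{defined} and that it simplifies to the left side. For definedness I must verify $u^- = v^+$ where $u = e\cdot x$ and $v = (e\cdot x)^-\cdot y$. By the identity just derived, $v^+ = (e\cdot x)^-\cdot y^+ = (e\cdot x)^-\cdot x^-$ (using $x^- = y^+$), and this equals $(e\cdot x)^- = u^-$ precisely by Lemma \ref{handy}(ii) applied with $e$ in the role of $x^+$. Once the product is legitimate, I expand it as $e\cdot x\cdot(e\cdot x)^-\cdot y$ and collapse $e\cdot x\cdot(e\cdot x)^- = e\cdot x$ via (\ref{eq:locsgp}d), leaving $e\cdot x\cdot y = {}_e|(x\circ y)$. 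The dual of (\ref{eq:skewcat}d) follows by the op-duality of the localisable axioms, completing the verification.
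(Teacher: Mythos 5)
Your proposal is correct and follows essentially the same route as the paper's own proof: reduce each axiom of (\ref{eq:cat}) and (\ref{eq:skewcat}) to the localisable identities, with the only substantive work being the definedness of the composite in (\ref{eq:skewcat}d), which you settle exactly as the paper does via the identity $((e\cdot x)^{-}\cdot y)^{+}=(e\cdot x)^{-}\cdot y^{+}$ and Lemma \ref{handy}(ii). No gaps.
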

\begin{proof}
 We first have to verify axioms (\ref{eq:cat}a--e). Associativity in $S$ guarantees (\ref{eq:cat}e).   
 By inspection, (\ref{eq:cat}a, b) and c) are already (\ref{eq:locsgp}a), d) and g) and Definition \ref{def:circ}(i).  
 Now use $ x^{-} = y^{+}$ in (\ref{eq:locsgp}b) to obtain (\ref{eq:cat}d) as 
 $(x\circ y)^{+} = (x\cdot y)^{+} = (x\cdot y^{+})^{+} = (x\cdot x^{-})^{+} = x^{+}$, and dually. 
 
 So $\mathscr{C}(S)$ is a category and next we must deal with axioms (\ref{eq:skewcat}).  Of these, (a, b, c) are immediate from Definition \ref{def:circ}(ii) and associativity of $S$. 
 For (\ref{eq:skewcat}d) consider the LHS: 
 $ _{e}|(x\circ y) = e\cdot(x\cdot y)$ with $x^{-} = y^{+}$.  Now check the composition on the RHS is valid: 
 $(_{(_{e}|x)^{-}}|y)^{+} = ((e\cdot x)^{-}\cdot y)^{+} = 
(e\cdot x)^{-}\cdot y^{+} = 
 (e\cdot x)^{-}\cdot x^{-} $,  using (\ref{eq:altax}).  
 By Lemma \ref{handy}(ii), this last equals $  (e\cdot x)^{-}$.  
Thus the composite $(_{e}|x)\circ(_{(_{e}|x)^{-}}|y)$ is defined, and equals $(e\cdot x)\cdot (e\cdot x)^{-}\cdot y = e\cdot x \cdot y$, so (\ref{eq:skewcat}d) is satisfied.   Wrapping up, (\ref{eq:skewcat}e) is immediate from (\ref{eq:locsgp}c), and (\ref{eq:skewcat}f) requires only associativity of ${S}$.
\end{proof}
Thus ${C}$ and ${S}$ are equivalent; in fact, 
\begin{theorem}\label{equiv}
 $\mathscr{C}(\mathscr{S}(C)) = C$  and  $\mathscr{S}(\mathscr{C}(S)) = S$.
\end{theorem}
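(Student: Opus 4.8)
The plan is to observe that both asserted equalities are identities of structures carried on the \emph{same} underlying set (namely $C$, respectively $S$), so that in each case it suffices to check that the binary data produced by the round trip — the partial composition and the two transcription maps — agree with the originals. In both directions the unary operations $+$ and $-$ are carried along unchanged: Definition \ref{def:circ} does not alter them, and the pseudoproduct (\ref{defotimes}) is built using the ambient $+,-$. As a preliminary I would record that the object set and the projection set are identified under both passages: in a transcription category $C^{+}=\{e: e=e^{+}\}=\{x^{+}:x\in C\}$ (using $(x^{+})^{+}=x^{+}$), and in a localisable semigroup $S^{+}=\{x^{+}:x\in S\}=\{e:e=e^{+}\}$ by Lemma \ref{band}(ii) and the remark following the definition. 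Hence on either side the transcription maps have the common domain $C\times C^{+}$ (resp.\ $S\times S^{+}$).

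For the first equality $\mathscr{C}(\mathscr{S}(C))=C$, recall $\mathscr{S}(C)=(C,\otimes,+,-)$, and that $\mathscr{C}(\mathscr{S}(C))$ re-derives a composition and transcription maps from $\otimes$ via Definition \ref{def:circ}. The re-derived composition is defined precisely when $x^{-}=y^{+}$ — exactly the domain of the original $\circ$ by (\ref{eq:cat}c) — and there equals $x\otimes y$, which is $x\circ y$ by Lemma \ref{lem}(i). The re-derived transcription maps are $e\otimes x$ and $x\otimes e$, which equal $\,_{e}|x$ and $x|_{e}$ by Lemma \ref{lem}(ii). Thus every primitive of $\mathscr{C}(\mathscr{S}(C))$ coincides with that of $C$.

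For the second equality $\mathscr{S}(\mathscr{C}(S))=S$, recall that in $\mathscr{C}(S)$ the transcription maps are $\,_{e}|x=e\cdot x$ and $x|_{f}=x\cdot f$, while the trace product is the restriction of $\cdot$. Substituting into the pseudoproduct (\ref{defotimes}) gives, for $x,y\in S$,
\[
x\otimes y=(x|_{y^{+}})\circ(\,_{x^{-}}|y)=(x\cdot y^{+})\cdot(x^{-}\cdot y)=x\cdot y^{+}\cdot x^{-}\cdot y,
\]
the trace product being legitimately applied because $(x\cdot y^{+})^{-}=(x^{-}\cdot y)^{+}$, an instance of (\ref{eq:altax}) and the defining check for $\otimes$. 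By Lemma \ref{handy}(i) the right-hand side is $x\cdot y$, so $\otimes=\cdot$; since $+,-$ are unchanged, the two structures coincide.

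The argument is thus essentially bookkeeping once Lemmas \ref{lem} and \ref{handy} are in hand. The closest thing to an obstacle is simply ensuring the structural types match: that the re-derived trace product in $\mathscr{S}(C)$ carries \emph{exactly} the composability condition $x^{-}=y^{+}$ of the original category (not a strictly larger or smaller domain), and that the object and projection sets are identified so the transcription maps share a domain. Both are dispatched by the preliminary remark together with (\ref{eq:cat}c) and Lemma \ref{lem}(i), after which the three displayed reductions finish each direction.
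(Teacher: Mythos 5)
Your proposal is correct and follows essentially the same route as the paper: same underlying set and unary maps, the first equality via Lemma \ref{lem}(i)--(ii) (the paper compresses this to ``$\circ$ is a restriction of $\cdot$''), and the second via Lemma \ref{handy}(i) together with (\ref{eq:altax}) to identify $\otimes$ with $\cdot$. Your version is in fact slightly more explicit than the paper's about checking that the recovered composability domain and transcription maps coincide with the originals, which is a point worth making.
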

\begin{proof}
 The constructions use the same base set $S$ and maps $+,-$. Moreover, $\circ$ is a restriction of $\cdot$, and $\otimes$ and $\cdot$ may be identified: for $x,y\in S$, $x\cdot y = x\cdot y^{+}\cdot x^{-}\cdot y$ by Lemma \ref{handy}(i).  Here, 
 $(x\cdot y^{+})^{-} = (x^{-}\cdot y)^{+}$ by (\ref{eq:altax}), whence $x\cdot y = (x\cdot y^{+})\circ(x^{-}\cdot y) = (x\vert _{y^{+}})\circ(_{x^{-}}\vert y)$ by Definition \ref{def:circ}, which equals $x\otimes y$ by (\ref{defotimes}).
\end{proof}
\subsection*{Morphisms}  Let $(S,\cdot,+,-)$ and 
$(S',\cdot,+,-)$ (in abuse of notation by overloading the operation symbols) be  localisable semigroups.  
A map $\phi\colon S\rightarrow S'$ is a \emph{morphism of  localisable semigroups} or a \emph{$\pm$-morphism} if it preserves the operations in the usual sense, i.e., it is a semigroup morphism such that $(x\phi)^{+} = (x^{+})\phi$ and $(x\phi)^{-} = (x^{-})\phi$.   
Similarly, let $(C,\circ,+,-)$ and 
$(C',\circ,+,-)$ be transcription categories.   
A map $\psi\colon C\rightarrow C'$ is a functor (morphism) of  transcription categories if it is a (small) functor in the usual sense (thus 
$(x\psi)^{+} = (x^{+})\psi$, etc.) and preserves the transcription maps, i.e., $(x\vert_{e})\psi = (x\psi)\vert_{e\psi}$, etc.  
\begin{theorem}
A map $\phi\colon S\rightarrow S'$ is a morphism of  localisable semigroups  if and only if it is a functor of  transcription categories $\mathscr{C}(S)\rightarrow \mathscr{C}(S')$.
\end{theorem}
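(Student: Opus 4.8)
The plan is to prove the two implications separately, with the reverse direction carrying the real content. For the forward direction, suppose $\phi$ is a $\pm$-morphism. Since $\phi$ preserves $+$ and $-$ by hypothesis, it respects the identity-preserving structure of the category immediately; in particular, whenever $x\circ y$ is defined, i.e. $x^{-}=y^{+}$, we get $(x\phi)^{-} = (x^{-})\phi = (y^{+})\phi = (y\phi)^{+}$, so $(x\phi)\circ(y\phi)$ is defined, and $(x\circ y)\phi = (x\cdot y)\phi = (x\phi)\cdot(y\phi) = (x\phi)\circ(y\phi)$ by Definition \ref{def:circ}(i). To see that the transcription maps are preserved, I would first note that for $e\in S^{+}$ we have $e\phi = (e^{+})\phi = (e\phi)^{+}\in (S')^{+}$, so the target-side transcription map is legitimately applicable; then $(x|_{e})\phi = (x\cdot e)\phi = (x\phi)\cdot(e\phi) = (x\phi)|_{e\phi}$ by Definition \ref{def:circ}(ii), and dually for $\,_{e}|x$. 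Thus $\phi$ is a functor of transcription categories.

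For the reverse direction, the key observation is that the full semigroup product $\cdot$ of $S$ is recoverable from the category data alone. Indeed, by Lemma \ref{handy}(i) together with the computation in the proof of Theorem \ref{equiv}, one has $x\cdot y = x\otimes y = (x|_{y^{+}})\circ(\,_{x^{-}}|y)$, an expression built solely from $\circ$ and the transcription maps. So, supposing $\phi$ is a functor of transcription categories, it preserves $+$ and $-$ by definition, and it only remains to verify that it is a semigroup morphism. Applying $\phi$ to the displayed expression, using first that $\phi$ preserves $\circ$ and then that it preserves the transcription maps and $+,-$, gives
\[
(x\cdot y)\phi = \big((x|_{y^{+}})\circ(\,_{x^{-}}|y)\big)\phi = \big((x\phi)|_{(y\phi)^{+}}\big)\circ\big(\,_{(x\phi)^{-}}|(y\phi)\big) = (x\phi)\otimes(y\phi) = (x\phi)\cdot(y\phi).
\]
Hence $\phi$ is a $\pm$-morphism, completing the equivalence.

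I expect the only genuine obstacle to be conceptual rather than computational: recognising that, although a functor is a priori required to respect only the partial operation $\circ$, the identity $x\cdot y = (x|_{y^{+}})\circ(\,_{x^{-}}|y)$ lets one reconstruct the total product $\cdot$ from $\circ$ together with the transcription maps. Once this is in hand, both implications reduce to short substitutions. A minor but necessary point to flag carefully is that $\phi$ maps $S^{+}$ into $(S')^{+}$, so that the transcription maps on the target side are applied to genuine projections; this follows at once from preservation of $+$.
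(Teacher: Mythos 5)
Your proposal is correct and follows essentially the same route as the paper: the forward direction is the same direct verification that $\circ$ and the transcription maps are preserved, and the reverse direction rests on exactly the paper's key identity $x\cdot y = x\otimes y = (x|_{y^{+}})\circ(\,_{x^{-}}|y)$ from Theorem \ref{equiv}, to which $\phi$ is then applied. The only (welcome) addition is your explicit remark that $\phi$ carries $S^{+}$ into $(S')^{+}$, which the paper leaves implicit.
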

\begin{proof}
Let $\phi\colon S \rightarrow S'$ be a morphism, so that $(x\phi)^{+} = (x^{+})\phi$ in $ \mathscr{C}(S')$.  If $x\circ y$ is defined in $\mathscr{C}(S')$, then $(x\phi)^{-} = x^{-}\phi = y^{+}\phi = (y\phi)^{+}$, so that $x\phi \circ y\phi$ is defined and $= x\phi \cdot y\phi = (x\cdot y)\phi =(x \circ y)\phi$.   If $e\in S^{+}$, 
$(x\vert_{e})\phi = (x\cdot e)\phi = x\phi \cdot e\phi = (x\phi\vert_{e\phi})$
  and similarly for the right transcription maps.  Conversely, if $\phi\colon C\rightarrow C'$ is a functor,  
  $(x\phi)^{\pm} = (x^{\pm})\phi$ and
  \begin{align*}
  (x\cdot y)\phi &&=&& (x\otimes y)\phi &&=&& [(x|_{y^{+}})\circ (_{x^{-}}|y)]\phi& &=&&  (x|_{y^{+}})\phi \circ (_{x^{-}}|y)\phi \\
  &&=&& (x\phi\vert_{y^{+}\phi}) \circ (_{x^{-}\phi}|y\phi) &&=&& x\phi \otimes y\phi &&=&& x\phi \cdot y\phi, 
 \end{align*}
 whence $\phi$ is a morphism $S\rightarrow S'$.
\end{proof}
Corresponding to the definition of morphisms, we make the following 
\begin{definition}\label{loco}
A \emph{localisable congruence} $\theta$ on the  localisable semigroup $(S,\cdot,+,-)$ is a congruence of the semigroup $(S, \cdot)$ for which $(x,y)\in \theta$ implies $(x^{+},y^{+}), (x^{-},y^{-}) \in \theta$.  For brevity we also refer to such a congruence as a \emph{$\pm$-congruence}.
\end{definition}

 \section{Examples and related classes}
 For the remainder of the paper, we shall (except where emphasis may be needed) omit the symbol $\cdot$ and instead denote multiplication in the semigroup(s) by juxtaposition.  In the Introduction we remarked on comparisons with restriction semigroups and Ehresmann semigroups, and here we expand on those comments.   
  \subsection {Restriction semigroups}   
By definition (see  Kudryavtseva \cite{Ku} for a recent paper which inspired the present work), $S$ is a \emph{restriction semigroup} if  it satisfies (in the present notation) axioms (\ref{eq:locsgp}a, c, d, f, g), the \emph{ample} identities
\begin{equation}\label{eq:conj}
 xy^{+} = (xy)^{+}x,  \hspace{3mm} x^{-}y = y(xy)^{-}  ,
\end{equation}
and has commuting projections (CP).  In effect, in comparison with restriction semigroups, localisable semigroups dispense with both (CP) and ample identities. 

Incidentally, (CP) may be replaced by the condition that $S^{+}$ be a band: 
\begin{lemma}\label{cp}
If both ample identities (\ref{eq:conj}) hold in a semigroup $S$ satisfying (\ref{eq:locsgp}a,d,g) and  $S^{+}S^{+}\subseteq S^{+}$, then (CP) holds.  
\end{lemma}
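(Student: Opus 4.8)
The plan is to specialise the two ample identities (\ref{eq:conj}) to pairs of projections and exploit the band hypothesis $S^{+}S^{+}\subseteq S^{+}$ to collapse both sides to a common ``sandwich'' expression. First I would record the elementary facts the argument needs. For any projection $e\in S^{+}$ one has $e^{+}=e=e^{-}$, which follows from (\ref{eq:locsgp}g) exactly as in the remark preceding Lemma \ref{band}. Moreover, by the band hypothesis, for $e,f\in S^{+}$ both $ef$ and $fe$ again lie in $S^{+}$, so that $(ef)^{+}=ef$ and $(fe)^{-}=fe$. These two consequences of the band condition are the only ones the proof will invoke.

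Next, fix $e,f\in S^{+}$. Applying the left ample identity $xy^{+}=(xy)^{+}x$ with $x=e$ and $y=f$, and using $f^{+}=f$ together with $(ef)^{+}=ef$, I would obtain $ef=(ef)e=efe$. Dually, applying the right ample identity $x^{-}y=y(xy)^{-}$ with $x=f$ and $y=e$, and using $f^{-}=f$ together with $(fe)^{-}=fe$, I would obtain $fe=e(fe)=efe$. Comparing the two conclusions gives $ef=efe=fe$, which is precisely (CP).

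The calculation is short, so there is no serious computational obstacle; the only real content lies in the choice of substitutions. The point I would emphasise is that each one-sided ample identity, once the projection on its right-hand side is absorbed into the idempotent $ef$ (respectively $fe$) supplied by the band hypothesis, forces the product of the two projections to equal the symmetric triple product $efe$. It is the matching of \emph{both} $ef$ and $fe$ to this same sandwich that yields commutativity, and it is exactly here that the band assumption $(ef)^{+}=ef$ and its dual are indispensable: without them the identities would not simplify in this way, and projections need not commute, as the regular $\ast$-semigroup examples noted in the Introduction illustrate.
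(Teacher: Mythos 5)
Your proposal is correct and follows essentially the same route as the paper's own proof: specialise each ample identity to a pair of projections, use $(ef)^{+}=ef$ and $(fe)^{-}=fe$ from the band hypothesis to get $ef=efe$ and $fe=efe$, and conclude $ef=fe$. The substitutions match the paper's exactly (up to renaming $e,f$ as $p,q$), so there is nothing further to add.
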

 \begin{proof}
 Let $p,q \in S^{+}$.  Then with $x= p = p^{-}$ and $ y= q = q^{+}$ in (\ref{eq:conj}) we have $pq = (pq)^{+}p = (pq)p$; with $x=q$ and $y=p$ we have $qp = p(qp)$.  Thus $pq = pqp = qp$. 
\end{proof}
In this context it is well-known that  (\ref{eq:locsgp}b) is a consequence of  (\ref{eq:conj}):
\begin{align*}
(xy^{+})^{+} &=^{5.1} &&[(xy)^{+}x]^{+} &&=^{4.1c} &&(xy)^{+}x^{+}  \\
 &   =^{CP} &&x^{+}(xy)^{+} &&=^{4.1c} &&(x^{+}xy)^{+} &=^{4.1a} (xy)^{+}.
\end{align*}
 The dual statement also holds, so any restriction semigroup, and in particular any inverse semigroup, satisfies all equations (\ref{eq:locsgp}) and so is a localisable semigroup. 
 This raises the question of examining localisable  
 semigroups with (CP). 
  \subsection{Ehresmann semigroups}
 \emph{Left Ehresmann} semigroups are unary semigroups satisfying (\ref{eq:locsgp}a), (\ref{eq:locsgp}b) and 
 \begin{equation}\label{eq:star}
  (x^{+} y^{+})^{+} = x^{+}y^{+} = y^{+} x^{+}
\end{equation}
(Branco et al.\cite{BGG}; Lawson \cite{La21}).  Right Ehresmann semigroups are defined dually, and we say $S$ is \emph{Ehresmann} if it is left and right Ehresmann and (\ref{eq:locsgp}g) holds.  Note that (\ref{eq:star}) implies (CP).  It turns out that localisable semigroups are precisely a non-commutative version of Ehresmann semigroups.  
\begin{lemma}\label{ehr}
 $(S,\cdot ,+) = S$ is left localisable and satisfies (CP) if and only if it is left Ehresmann.
\end{lemma}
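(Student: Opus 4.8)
The plan is to treat the two implications separately, exploiting the fact that axioms (\ref{eq:locsgp}a) and (\ref{eq:locsgp}b) appear verbatim in the definition of both \emph{left localisable} and \emph{left Ehresmann}. Hence these two axioms are inherited in either direction without comment, and the only real content is to translate between ``(\ref{eq:locsgp}c) together with (CP)'' and the single identity (\ref{eq:star}). I expect each direction to collapse to a one-line substitution into an already-available identity, with Lemma \ref{band} supplying the few auxiliary facts about projections that are needed.

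For the forward implication, I would assume $S$ is left localisable and satisfies (CP), and aim to establish (\ref{eq:star}). The commuting clause $x^{+}y^{+} = y^{+}x^{+}$ is exactly (CP) applied to the projections $x^{+},y^{+}\in S^{+}$, so nothing is required there. For the idempotence clause $(x^{+}y^{+})^{+} = x^{+}y^{+}$, the natural move is to put $y^{+}$ in place of $y$ in (\ref{eq:locsgp}c), giving $x^{+}(y^{+})^{+} = (x^{+}y^{+})^{+}$, and then to invoke Lemma \ref{band}(ii)—whose hypotheses (\ref{eq:locsgp}a), (\ref{eq:locsgp}b), (\ref{eq:locsgp}c) are all in force—to rewrite $(y^{+})^{+}$ as $y^{+}$. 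The left-hand side becomes $x^{+}y^{+}$, yielding $x^{+}y^{+} = (x^{+}y^{+})^{+}$ directly.

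For the converse, I would assume $S$ is left Ehresmann and recover (\ref{eq:locsgp}c) and (CP). Here (CP) is read off at once as the outer equality $x^{+}y^{+}=y^{+}x^{+}$ of (\ref{eq:star}), so only (\ref{eq:locsgp}c) remains. The cleanest route is to substitute $x^{+}$ for $x$ in (\ref{eq:locsgp}b), obtaining $(x^{+}y)^{+} = (x^{+}y^{+})^{+}$, and then to replace the right-hand side by $x^{+}y^{+}$ using the idempotence clause $(x^{+}y^{+})^{+}=x^{+}y^{+}$ of (\ref{eq:star}); the result is precisely (\ref{eq:locsgp}c). Alternatively one could first check that (\ref{eq:star}) makes $S^{+}$ a band—closure is immediate from (\ref{eq:star}), and idempotence of each $x^{+}$ follows by applying $+$ to (\ref{eq:locsgp}a) and then (\ref{eq:locsgp}b)—and then cite Lemma \ref{band}(iii) to pass from (\ref{eq:locsgp}b) to (\ref{eq:locsgp}c). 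I do not anticipate a genuine obstacle in either direction; the only point deserving care is confirming that Lemma \ref{band}(ii) is legitimately available in the forward direction (its full hypothesis set holds by assumption), and keeping straight which clause of (\ref{eq:star}) delivers commutativity versus idempotence.
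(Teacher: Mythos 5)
Your proof is correct and follows essentially the same route as the paper: both arguments note that (\ref{eq:locsgp}a, b) are common to the two definitions and then reduce everything to the substitution instance $(x^{+}y)^{+}=(x^{+}y^{+})^{+}$ of (\ref{eq:locsgp}b) (equivalently, your $y\mapsto y^{+}$ substitution in (\ref{eq:locsgp}c) together with Lemma \ref{band}(ii)), simplifying one side via (\ref{eq:locsgp}c) in the forward direction and via (\ref{eq:star}) in the converse.
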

\begin{proof}
Axioms (\ref{eq:locsgp}a) and b) are common to both definitions; so it is sufficient to show that, in their presence, the conjunction of (CP) and (\ref{eq:locsgp}c) is equivalent to (\ref{eq:star}).  So consider the identity $(x^{+}y)^{+} = (x^{+}y^{+})^{+}$ holding by (\ref{eq:locsgp}b). 

 If  (\ref{eq:locsgp}c) holds, the left side is $x^{+}y^{+}$ and with (CP) we have (\ref{eq:star}).  
If (\ref{eq:star}) holds, the right side of the same identity is $ x^{+}y^{+}$ and this gives (\ref{eq:locsgp}c); {\it en passant} we have (CP) also.
\end{proof}
\begin{corollary}
 $S$ is localisable and satisfies (CP) if and only if it is Ehresmann.
\end{corollary}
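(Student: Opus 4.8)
The plan is to reduce this two-sided statement to the one-sided Lemma~\ref{ehr} together with its dual, gluing the two one-sided structures by the linking axiom (\ref{eq:locsgp}g). First I would note that the only content beyond Lemma~\ref{ehr} is bookkeeping: by definition a localisable semigroup is a left localisable and a right localisable semigroup on a common base set satisfying (\ref{eq:locsgp}g), and an Ehresmann semigroup is a left Ehresmann and a right Ehresmann semigroup on a common base set satisfying (\ref{eq:locsgp}g). Since (\ref{eq:locsgp}g) is part of both definitions, it can simply be carried along in either direction of the proof.

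For the forward implication, suppose $S$ is localisable and satisfies (CP). Then $(S,\cdot,+)$ is left localisable, and since (CP) holds, Lemma~\ref{ehr} (whose hypotheses are exactly left localisability and (CP)) gives that $(S,\cdot,+)$ is left Ehresmann. Dually, $(S,\cdot,-)$ is right Ehresmann. With (\ref{eq:locsgp}g) already assumed, this is precisely the definition of an Ehresmann semigroup.

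For the converse, suppose $S$ is Ehresmann. Then $(S,\cdot,+)$ is left Ehresmann, so by the ``only if'' half of Lemma~\ref{ehr} it is left localisable and satisfies (CP); dually $(S,\cdot,-)$ is right localisable and satisfies (CP). Since (\ref{eq:locsgp}g) is part of being Ehresmann, $S$ is localisable, and (CP) has been recovered along the way.

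The one point requiring care — and the only place the argument could slip — is that the hypothesis (CP) must be read as a single, unambiguous condition. A priori ``projections'' could mean either $S^{+}=\{x^{+}\}$ or $S^{-}=\{x^{-}\}$, and the left and right invocations of Lemma~\ref{ehr} refer respectively to these two sets. But (\ref{eq:locsgp}g) forces $S^{+}=S^{-}$, as recorded just after the definition of a localisable semigroup, so commuting of projections means the same thing on both sides. This coincidence is exactly what lets the shared (CP) hypothesis feed both the left and the right application of Lemma~\ref{ehr}, and I expect it to be essentially the whole of the ``work'' in an otherwise formal argument.
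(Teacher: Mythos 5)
Your argument is correct and is exactly the intended one: the paper leaves the corollary without proof precisely because it follows from Lemma~\ref{ehr}, its dual, and the observation that axiom (\ref{eq:locsgp}g) is common to both definitions and makes $S^{+}=S^{-}$, so (CP) is a single condition feeding both one-sided applications. Your remark about the potential ambiguity of ``projections'' is the right point of care, and nothing further is needed.
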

We consider one-sided localisable semigroups further in the next Section.  For extra developments in the topic of one-sided restriction-like semigroups, the reader may consult Stokes \cite{St3}.
 \subsection{Monoids}
 Since any monoid $S$ with identity $1$ may be regarded as a restriction semigroup, it also is a localisable semigroup.  One simply defines 
$s^{+} = s^{- } = 1$ for all $s\in S$.  
 There is just one projection, and $S$ is then called a \emph{reduced} restriction monoid.  
 (There are localisable monoids with multiple projections: for example, let $S$ be any localisable semigroup and adjoin a new identity $1$ with $1^{+} = 1^{-} = 1$.) 
 
Reduced restriction monoids occur as subsemigroups in any localisable $S$: if $e\in P$, then consider $M_{e}:=\{x\in S\colon x^{+} = x^{-} = e\}$.  
If $x,y\in M_{e}$, $(xy)^{+} = (xy^{+})^{+} = x^{+} = e$ by (\ref{eq:locsgp}b) and dually, $(xy)^{-} = e$.   So $xy\in M_{e}$ and $M_{e}$ is a subsemigroup of $S$, a localisable semigroup in its own right, and a reduced restriction monoid.  It coincides with the monoid of endomorphisms of object $e$ in $\mathscr{C}(S)$.

\subsection{Orthocryptogroups}
Any band $S$ is a localisable semigroup when we take $s^{+} = s^{-} = s$.  
More generally, let $S$ be an \emph{orthocryptogroup}, which is to say it is a union of groups, is orthodox (its idempotents form a band) and its Green's relation $\mathscr{H}$ is a congruence.  By Theorem II.8.5 (iii) of the monograph \cite{PR}, $s^{0}t^{0} = (st)^{0}$ holds for all $s,t\in S$,
where, as usual, $s^{0}$ denotes the identity of the subgroup of $S$ which contains $s$.   Then,  
on setting $s^{+} = s^{-} = s^{0}$, one immediately verifies that (\ref{eq:locsgp}a--g) all hold, and $S$ is a localisable semigroup.

Notwithstanding all the above, we still need  non-trivial and non-artificial concrete examples of  localisable semigroups.

\section{One-sided  reducts and generalised Green's relations}
In this section, we compare one-sided versions of localisable semigroups, considered as a class of unary semigroups, with other such classes.  Consider a left localisable semigroup $(S,\cdot, +)$ as defined in (\ref{eq:locsgp}a, b, c), and let a relation on $S$ be defined thus:
$$s\,\widetilde{\leq}_{R}\,t  \text{   if   } pt = t \text{  implies  } ps = s   \text{   for all  }   p\in S^{+} .$$
It is readily seen that $\widetilde{\leq}_{R}$ is reflexive and transitive, and contains the right division relation ($s = tx$ for some $x$). We record some results for further reference; note the crucial use of (\ref{eq:locsgp}c), $(ps)^{+} = ps^{+}$.
\begin{lemma}\label{TFAE}
 The following are equivalent:
 \begin{enumerate}[i)]\label{eq:swung}
 \item $s\,\widetilde{\leq}_{R}\,t $;
 \item $s^{+}\,\widetilde{\leq}_{R}\,t^{+} $;
 \item $s^{+} \leq_{R} t^{+}$;
 \item $s = t^{+}s$.
 \end{enumerate}
\end{lemma}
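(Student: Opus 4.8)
The plan is to route every equivalence through a single ``bridging'' observation: for a projection $p$, whether or not $p$ fixes an element $t$ on the left depends only on the projection $t^{+}$. Concretely, I would first establish that for all $p\in S^{+}$ and $t\in S$,
\begin{equation*}
pt = t \iff pt^{+} = t^{+} .
\end{equation*}
The forward direction comes from applying $+$ to $pt=t$ and using the sharpened form $(pt)^{+} = pt^{+}$ of (\ref{eq:locsgp}c) (legitimate because $p=p^{+}$); the reverse direction uses $t = t^{+}t$ from (\ref{eq:locsgp}a), giving $pt = (pt^{+})t = t^{+}t = t$. This one equivalence carries the bulk of the argument.

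With it in hand, (i) $\Leftrightarrow$ (ii) is immediate: replacing the hypothesis $pt=t$ by $pt^{+}=t^{+}$ and the conclusion $ps=s$ by $ps^{+}=s^{+}$ turns the defining condition of $s\,\widetilde{\leq}_{R}\,t$ verbatim, and quantifier-by-quantifier over $p\in S^{+}$, into that of $s^{+}\,\widetilde{\leq}_{R}\,t^{+}$. For (ii) $\Leftrightarrow$ (iii) I would then work inside the band $S^{+}$ (Lemma \ref{band}(i)), reading (iii) as $t^{+}s^{+}=s^{+}$; this is the standard description of $\leq_{R}$ on projections, since $s^{+}=t^{+}x$ forces $t^{+}s^{+} = t^{+}t^{+}x = t^{+}x = s^{+}$ and the converse is trivial. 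The implication (iii) $\Rightarrow$ (ii) is the short computation $ps^{+} = p(t^{+}s^{+}) = (pt^{+})s^{+} = t^{+}s^{+} = s^{+}$ whenever $pt^{+}=t^{+}$, while (ii) $\Rightarrow$ (iii) falls out of the single specialisation $p=t^{+}$, for which $pt^{+}=t^{+}$ holds automatically by idempotency and hence forces $t^{+}s^{+}=s^{+}$.

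Finally, (iii) $\Leftrightarrow$ (iv) is a direct translation between $t^{+}s^{+}=s^{+}$ and $s=t^{+}s$: from the former, multiply on the right and use (\ref{eq:locsgp}a) to get $t^{+}s = t^{+}s^{+}s = s^{+}s = s$; from the latter, apply $+$ and (\ref{eq:locsgp}c) to get $s^{+} = (t^{+}s)^{+} = t^{+}s^{+}$. I expect no genuine obstacle: the whole lemma is short once the bridging equivalence is available. The only points requiring care are the consistent use of (\ref{eq:locsgp}c) in the projection form $(ps)^{+}=ps^{+}$, the appeal to idempotency in $S^{+}$, and checking that each biconditional is honestly reversible rather than just one implication. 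The crux—and the real reason the four conditions coincide—is the first-paragraph equivalence $pt=t \Leftrightarrow pt^{+}=t^{+}$, which lets every statement about arbitrary $s,t$ be reduced to a statement about the projections $s^{+},t^{+}$.
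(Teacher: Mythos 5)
Your proof is correct and rests on the same two facts the paper uses — $(pt)^{+}=pt^{+}$ from (\ref{eq:locsgp}c) and $pt=pt^{+}t$ from (\ref{eq:locsgp}a) — so it is essentially the paper's argument, merely reorganised as three explicit biconditionals with your ``bridging'' equivalence made explicit, where the paper proves the single cycle (i)$\Rightarrow$(ii)$\Rightarrow$(iii)$\Rightarrow$(iv)$\Rightarrow$(i).
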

\begin{proof}
 Let $p\in S^{+}$.  If (i) holds, $pt^{+}=t^{+}$ implies $pt=t$ and in turn $ps=s$, and  $ps^{+} = (ps)^{+} = s^{+}$, so (ii) holds.  Also, since $t^{+}t^{+} = t^{+}$, (ii) implies $t^{+}s^{+} = s^{+}$, which is (iii).  Next, $t^{+}s^{+} = s^{+}$ implies $ t^{+}s = s$, so (iii) implies (iv).  Finally, if $t^{+}s = s$ and $pt=t$, then 
$ps = pt^{+}s = (pt)^{+}s = t^{+}s = s,$
and (i) holds.
\end{proof}
 Symmetrising $\widetilde{\leq}_{R}$ gives the `swung' Green's relation 
 $\widetilde{\mathscr{R}} = \{(s,t)\colon ps=s \Leftrightarrow pt=t\}$. 
 \begin{corollary}
 For all $s,t,u\in S$,
 $s\,\widetilde{\leq}_{R}\,t$ implies $us\,\widetilde{\leq}_{R}\,ut$, and so $\widetilde{\mathscr{R}}$ is a left congruence.  
\end{corollary}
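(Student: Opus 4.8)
The plan is to reduce the first assertion to the clean algebraic criterion furnished by Lemma~\ref{TFAE}, namely the equivalence of (i) and (iv): $s\,\widetilde{\leq}_{R}\,t$ holds precisely when $s = t^{+}s$. Applying this same equivalence to the pair $(us, ut)$, the implication I must establish becomes purely computational: assuming $s = t^{+}s$, show that $us = (ut)^{+}us$, which then yields $us\,\widetilde{\leq}_{R}\,ut$.

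The computation proceeds in two moves. First I would rewrite $(ut)^{+}$ using axiom (\ref{eq:locsgp}b), which gives $(ut)^{+} = (ut^{+})^{+}$. Next, axiom (\ref{eq:locsgp}a) applied with $x = ut^{+}$ gives $(ut^{+})^{+}(ut^{+}) = ut^{+}$, so that $(ut)^{+}ut^{+} = ut^{+}$. Chaining these, and inserting the hypothesis $s = t^{+}s$ at the right moment, I compute $(ut)^{+}us = (ut)^{+}ut^{+}s = ut^{+}s = us$, as required. Thus the whole argument rests only on (\ref{eq:locsgp}a) and (\ref{eq:locsgp}b) together with Lemma~\ref{TFAE}.

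For the second assertion, recall that $\widetilde{\mathscr{R}}$ is the symmetrisation of the preorder $\widetilde{\leq}_{R}$, so that $s\,\widetilde{\mathscr{R}}\,t$ means both $s\,\widetilde{\leq}_{R}\,t$ and $t\,\widetilde{\leq}_{R}\,s$. Being the symmetrisation of a reflexive and transitive relation, $\widetilde{\mathscr{R}}$ is automatically an equivalence, so only left compatibility remains, and this is immediate from the first part: if $s\,\widetilde{\mathscr{R}}\,t$, then applying the established implication in both directions gives $us\,\widetilde{\leq}_{R}\,ut$ and $ut\,\widetilde{\leq}_{R}\,us$, whence $us\,\widetilde{\mathscr{R}}\,ut$.

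The main obstacle is conceptual rather than computational: one must resist the temptation to invoke the ample identity $uy^{+} = (uy)^{+}u$, which is \emph{not} available in a merely left localisable semigroup. The correct route is to convert $(ut)^{+}$ into $(ut^{+})^{+}$ via (\ref{eq:locsgp}b) and then absorb it using (\ref{eq:locsgp}a); recognising that these two axioms alone suffice (in place of the ample identity used in the restriction-semigroup setting) is the only delicate point in the argument.
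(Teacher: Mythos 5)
Your proof is correct and follows essentially the same route as the paper's: both reduce the claim to the criterion $us = (ut)^{+}us$ via Lemma \ref{TFAE}(i)$\Leftrightarrow$(iv) and then verify it by the same computation, rewriting $(ut)^{+}$ as $(ut^{+})^{+}$ with (\ref{eq:locsgp}b) and absorbing via (\ref{eq:locsgp}a).
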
 
 \begin{proof}
 Suppose $s\,\widetilde{\leq}_{R}\,t$; by part (iv) of the Lemma, $t^{+}s = s$.  Consider 
 $(ut)^{+}us = (ut^{+})^{+}us = (ut^{+})^{+}ut^{+}s = (ut^{+})s = us$; this shows $us\,\widetilde{\leq}_{R}\,ut$, and the rest follows.
\end{proof}
Let $E$ be a distinguished set of idempotents of a unary semigroup $(S,\cdot,+)$, and recall that $S$ is called \emph{weakly left $E$-abundant} (Gould) or \emph{left $E$-semiabundant} (Stokes) if every
$\widetilde{\mathscr{R}}$-class contains an element of $E$.  
It follows that left localisable semigroups are weakly left $E$-abundant with $E = \{s^{+} \colon s\in S\} = S^{+}$.  
However, the converse is false:

{\bf Example.}  Let $S$ be the semigroup with zero $0$ generated by idempotents $e, f$ such that $fe = 0$, writing $ef = a$ for convenience.  The diligent reader will verify with a little light work that $S = \{e, f, a, 0\}$,   that $E = E(S) = \{e,f,0\}$, and that the assignment 
$e^{+}=e=a^{+}, f^{+}=f, 0^{+}=0$ satisfies  $y^{+}x = x$ if and only if 
$y^{+}x^{+} =x^{+}$.
Thus $x\,\widetilde{\mathscr{R}}\, x^{+}$ for all $x\in S$, and so $S$ is weakly left $E$-abundant.  
However the corresponding
unary operation $x \mapsto x^{+}$ does not make $(S,\cdot , +)$  left localisable, for $E$ is not a band ($ef = a \notin E$).  

{\bf On generalised D-semigroups.}  A more stringent condition requires a \emph{unique} element of $E$ in each $\widetilde{\mathscr{R}}$-class; this gives the 
 \emph{generalised D-semigroups} studied by Stokes \cite{St2}.  This class neither contains nor is contained by that of left localisable semigroups.  Indeed, a generalised D-semigroup $S$ in which $S^{+}$ is not a band cannot be left localisable.  On the other hand, a band $S$ having an $\widetilde{\mathscr{R}} =  {\mathscr{R}}$-class with distinct elements fails the uniqueness requirement, so $S$ is left localisable (with $s^{+}= s$) but not a generalised D-semigroup.

The intersection of these two classes merits consideration.  Let $S$ be a left localisable semigroup in which $S^{+}$ is a left regular band.  Conditions 1, 2 and 3 of \cite{St2} follow from (\ref{eq:locsgp}a,b,c) and condition 4 holds by left regularity of $S^{+}$.  (The notations $x^{+}$ and $D(x)$ are identified.)  Now by Corollary 2.13 of \cite{St2}, $S$ is a D-semigroup with band of projections, necessarily left regular.  So $S$ also satisfies condition (D3) of \cite{St1}, namely $(xy)^{+}x^{+}=x^{+}(xy)^{+}=(xy)^{+}$; the second of these equations is a consequence of  (\ref{eq:locsgp}c) and the first implies that $S^{+}$ is a left regular band.  So the class of left localisable generalised D-semigroups is identified as the class of D-semigroups with a band of projections or, equivalently, the class of left localisable semigroups satisfying $(xy)^{+}x^{+} = x^{+}(xy)^{+}$. 
   
 Note, however, that the unary projection map in an arbitrary left localisable semigroup $(S,\cdot,+)$ may be modified to produce a generalised D-semigroup, as follows. Choose $X\subseteq S^{+}$ to be a cross-section of the equivalence $\widetilde{\mathscr{R}}$, and for $s\in S$, define $s^{\oplus}$ to satisfy $s^{\oplus} \in X$ and $s^{+}\,\widetilde{\mathscr{R}}\,s^{\oplus}$.  It follows that $s^{\oplus}s^{\oplus} = s^{\oplus}$ and $s^{\oplus}s^{+} = s^{+}$, whence  
$s^{\oplus} s = s^{\oplus}s^{+}s = s^{+}s = s$.  That is, $(S,\cdot,\oplus)$ is a generalised D-semigroup.  

\section{Other structural features} 
\subsection*{An order}  We may define an analogue of the natural partial order on a restriction semigroup:
\begin{definition}\label{po}
 Write $s \unlhd t$ if $s = s^{+}t = ts^{-}$.
\end{definition}
Recall that the natural (Mitsch) partial order $\leq_{M}$ on any semigroup $S$ is defined by 
$s\leq_{M}t$ if $s = t$ or there exist $x,y \in S$ such that $s = xt = ty = xty$. 
\begin{lemma} 
The relation $\unlhd$ is a partial order on $S$ which is contained in the  natural partial order $\leq_{M}$, and coincides with it when restricted to the band  $S^{+}$. 
\end{lemma}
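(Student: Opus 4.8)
The plan is to verify the three assertions in turn: that $\unlhd$ is a partial order, that it refines $\leq_M$, and that the two orders agree on $S^{+}$. Reflexivity is immediate, since $s = s^{+}s = ss^{-}$ by (\ref{eq:locsgp}a) and (\ref{eq:locsgp}d). Before tackling antisymmetry and transitivity I would record two auxiliary identities obtained by applying the unary operations to the defining equations: if $s = s^{+}t$ then $s^{+} = (s^{+}t)^{+} = s^{+}t^{+}$ by (\ref{eq:locsgp}c), and dually if $s = ts^{-}$ then $s^{-} = (ts^{-})^{-} = t^{-}s^{-}$ by (\ref{eq:locsgp}f). These say that $s^{+}$ lies beneath $t^{+}$ and $s^{-}$ beneath $t^{-}$ in the band $S^{+}$, and they carry most of the weight of the argument.

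For antisymmetry, suppose $s \unlhd t$ and $t \unlhd s$, so in particular $s = s^{+}t$ and $t = st^{-}$. Substituting the second equation into the first and using (\ref{eq:locsgp}a) gives $s = s^{+}(st^{-}) = (s^{+}s)t^{-} = st^{-} = t$, with no further work needed. For transitivity, from $s \unlhd t$ and $t \unlhd u$ I would chain $s = s^{+}t = s^{+}(t^{+}u) = (s^{+}t^{+})u = s^{+}u$, invoking the identity $s^{+}t^{+} = s^{+}$; the dual computation $s = ts^{-} = (ut^{-})s^{-} = u(t^{-}s^{-}) = us^{-}$ then yields $s = s^{+}u = us^{-}$, that is, $s \unlhd u$.

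For the containment $\unlhd\,\subseteq\,\leq_{M}$, given $s \unlhd t$ I would exhibit the Mitsch witnesses $x = s^{+}$ and $y = s^{-}$: then $xt = s^{+}t = s$, $ty = ts^{-} = s$, and $xty = s^{+}(ts^{-}) = s^{+}s = s$, which is exactly $s \leq_{M} t$. Finally, to see that the two orders coincide on $S^{+}$, note that for $p \in S^{+}$ one has $p^{+} = p^{-} = p$ (using (\ref{eq:locsgp}g) and Lemma \ref{band}(ii)), so that $p \unlhd q$ reduces to the single requirement $p = pq = qp$. Since $\unlhd\,\subseteq\,\leq_{M}$ is already established, only the reverse inclusion on $S^{+}$ remains: if $p \leq_{M} q$ with $p = xq = qy$, then idempotency of $q$ in the band $S^{+}$ gives $pq = (xq)q = xq = p$ and $qp = q(qy) = qy = p$, whence $p \unlhd q$.

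I do not expect a genuine obstacle here; the only point requiring care is choosing, for antisymmetry and transitivity, the correct half of each defining equation so that the auxiliary identities $s^{+}t^{+} = s^{+}$ and $t^{-}s^{-} = s^{-}$ become applicable. Everything else is substitution using (\ref{eq:locsgp}a), (\ref{eq:locsgp}d) and the band structure of $S^{+}$ recorded in Lemma \ref{band}.
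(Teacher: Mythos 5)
Your proof is correct and follows essentially the same route as the paper's: transitivity via $(s^{+}t)^{+}=s^{+}t^{+}$ from (\ref{eq:locsgp}c), antisymmetry by substituting one defining equation into the other (you use the mirror-image pair $s=s^{+}t$, $t=st^{-}$ where the paper uses $t=t^{+}s$, $s=ts^{-}$), and the same observations for the Mitsch containment and the restriction to $S^{+}$. You merely spell out details the paper calls immediate (the Mitsch witnesses $x=s^{+}$, $y=s^{-}$ and the reverse inclusion on the band).
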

\begin{proof}
From Definition \ref{po} it is clear that $s\unlhd s$ for all $s\in S$.  If $s \unlhd t\unlhd u$, then $s = s^{+}t = s^{+} t^{+}u = (s^{+} t)^{+}u$ (by (\ref{eq:locsgp})c) $= s^{+}u$, and dually $s=us^{-}$, whence $s\unlhd u$. Then if $s\unlhd t\unlhd s$ we have $s = ts^{-}$ and $t = t^{+}s = t^{+}ts^{-} = ts^{-} = s$.   
It is immediate from the definitions that $s\unlhd t$  implies $s \leq_{M} t$.   When $s= s^{+} = s^{-}$, we have $s\unlhd t$ if and only if $s=st = ts$, as claimed. 
\end{proof}
The order $\unlhd$ is the identity on any reduced restriction monoid, and so differs in general from the Mitsch order.
\subsection*{Actions and transformation representations}  The transcription structures lead to actions of $S$ on the projections.  These give representations of $S$ by transformations, which  naturally have weaker properties than their counterparts in the restriction semigroup setting.  The development here may be compared to that of El-Qallali et al. \cite{QFG} and Gomes and Gould \cite{GG}, which can go much deeper because of the extra conditions. 

\begin{definition}\label{action}
 For $p\in S^{+}$ and $s,t \in S$, let us write $p^{s}$ for $(ps)^{-}\in S^{+}$ and dually, $^{t}p$ for $(tp)^{+}$.  
\end{definition}
The ample identities provide a notion of conjugacy (cf. Ara\'{u}jo et al. \cite{AKKM}) and although those are not available here, the notations $p^{s}$ and $^{t}p$ conveniently and compactly reflect the fact that there is structure enough to yield actions. 
\begin{lemma} \label{ishom}
For $p\in S^{+}$ and $s,t \in S$, $(p^{s})^{t} =  p^{st}$ and $^{s}(^{t}p) = \,^{st}p$.  
\end{lemma}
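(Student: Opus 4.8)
The plan is to unwind both definitions and reduce each identity to a single application of one of the localisable axioms together with associativity, working entirely in terms of the underlying multiplication. Recall from Definition \ref{action} that $p^{s} = (ps)^{-}$ and ${}^{t}p = (tp)^{+}$.

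For the first identity, note first that $p^{s} = (ps)^{-} \in S^{-} = S^{+}$, so $p^{s}$ is itself a projection and the defining formula for $(\cdot)^{t}$ may legitimately be reapplied. Thus $(p^{s})^{t} = ((p^{s})t)^{-} = ((ps)^{-}t)^{-}$, while on the other hand $p^{st} = (p(st))^{-} = (pst)^{-}$ by associativity. So the claim $(p^{s})^{t} = p^{st}$ amounts to
\[
((ps)^{-}t)^{-} = (pst)^{-}.
\]
I would then observe that this is exactly an instance of the right localisable axiom (\ref{eq:locsgp}e), namely $(xy)^{-} = (x^{-}y)^{-}$, applied with $x = ps$ and $y = t$: it yields $((ps)t)^{-} = ((ps)^{-}t)^{-}$, and the left side is $(pst)^{-}$ by associativity. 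This closes the first identity.

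For the second identity ${}^{s}({}^{t}p) = {}^{st}p$, I would run the dual argument. Unwinding the definitions gives ${}^{s}({}^{t}p) = (s(tp)^{+})^{+}$, whereas ${}^{st}p = ((st)p)^{+} = (stp)^{+}$. The left localisable axiom (\ref{eq:locsgp}b), $(xy)^{+} = (xy^{+})^{+}$, applied with $x = s$ and $y = tp$, gives $(s(tp))^{+} = (s(tp)^{+})^{+}$, and associativity identifies $(s(tp))^{+}$ with $(stp)^{+}$, completing the argument.

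There is essentially no obstacle here beyond bookkeeping: the whole content is the recognition that the ``absorption'' identities $((ps)^{-}t)^{-} = (pst)^{-}$ and $(s(tp)^{+})^{+} = (stp)^{+}$ are precisely (\ref{eq:locsgp}e) and (\ref{eq:locsgp}b), which already encode that the projection attached to a product is insensitive to pre-stabilising the other factor. The only point requiring a moment's care is the remark that $p^{s}$ lies in $S^{+}$, so that the defining formula for $(\cdot)^{t}$ can be applied a second time.
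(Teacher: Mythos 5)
Your proof is correct and follows essentially the same route as the paper's: unwind the definitions, apply axiom (\ref{eq:locsgp}e) with $x=ps$, $y=t$ (together with associativity) for the first identity, and treat the second as the dual via (\ref{eq:locsgp}b). The only difference is that you write out the dual argument explicitly where the paper simply declares it dual.
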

\begin{proof}
 $(p^{s})^{t} = (p^{s}t)^{-} = ((ps)^{-} t)^{-} = ((ps) t)^{-}$
 from axiom (\ref{eq:locsgp}e), and so $(p^{s})^{t} = (p(st))^{-} = p^{st}$.  The second equation is dual.
 \end{proof}
 Thus we have defined a (right) action $S^{+}\times S \to S^{+}$, $(p,s)\mapsto p^{s}$.  
 Let us write $s\delta \colon p\mapsto p^{s}$ for $p\in S^{+}$, and from this define the map  $\delta\colon s\mapsto s\delta$.   
By Lemma \ref{ishom}, $(st)\delta = s\delta\circ t\delta$, so $\delta$ is a semigroup morphism and a representation of $S$ (as a plain semigroup) in the transformation monoid $\mathscr{T}_{S^{+}}$. 

There is dually a left action $(s,p)\mapsto \; ^{s}p  = (sp)^{+}$ and corresponding (anti-) representation $\gamma$.
Notationally it is most convenient to write $\gamma$ and $\gamma s$ as left mappings, $\gamma\colon s\mapsto \gamma s$, where $\gamma s(p) = \,^{s}p = (sp)^{+}$,  
so that $\gamma st(p) =\,^{st}p = (stp)^{+} = \gamma s \circ \gamma t(p)$, and $\gamma$ manifests as a representation 
$S\rightarrow \mathscr{T}_{S^{+}}$.   
Thus the combination of $\gamma$ and $\delta$ results in a representation 
 $\gamma\otimes\delta\colon 
 S\rightarrow \mathscr{T}_{S^{+}}\times\mathscr{T}_{S^{+}}$,
 $s\mapsto (\gamma s, s\delta)$. 
  
 Let us note that $\gamma s^{\pm}(p) = (s^{\pm}p)^{+} = s^{\pm}p$, and dually $(p)s^{\pm}\delta = ps^{\pm}$.  
 (Here and later we use $\pm$ with the meaning that it may be replaced in a statement by either $+$ throughout or by $-$ throughout.)
 Thus $\gamma s^{\pm} = \lambda_{s^{\pm}}$, the inner left translation on $S^{+}$, and $s^{\pm}\delta = \rho_{s^{\pm}}$, the inner right translation on $S^{+}$.  
 As usual, we denote the band of inner left [right] translations of $S^{+}$ by $\Lambda_{0}$ [$\Rho_{0}$], and the band of linked pairs of inner translations (bitranslations) $(\lambda_{p},\rho_{p})$ of $S^{+}$ by $\Omega_{0}$.   Recall that $S^{+}\cong \Omega_{0} \leq \Lambda_{0} \times{\Rho}_{0}$ via the map $p\mapsto (\lambda_{p},\rho_{p})$, so that we may identify $S^{+}$ with $\Omega_{0}$.

The point of these remarks is that the definition of projections on the semigroup $S(\gamma\otimes\delta)$ by means of 
$(\gamma s, s\delta)^{\pm} = (\gamma s^{\pm}, s^{\pm}\delta)$ is well-defined if and only if $(\gamma s, s\delta) = (\gamma t, t\delta)$ implies $(\gamma s^{\pm}, s^{\pm}\delta) = (\gamma t^{\pm}, t^{\pm}\delta)$.  By the preceding discussion, this is so if and only if $(\lambda_{s^{\pm}},\rho_{s^{\pm}}) = (\lambda_{t^{\pm}},\rho_{t^{\pm}})$, i.e., $s^{\pm} = t^{\pm}$.  
This leads us to consider the relation 
 $\mu  =  \mu^{S}$ such that $(s,t)\in \mu$ if and only if 
\begin{equation}\label{ker}
s^{+} = t^{+}, s^{-} = t^{-}, (sp)^{+} = (tp)^{+}\text{   and   } (ps)^{-} = (pt)^{-} \text{   for all   } p\in S^{+}.
\end{equation}
\begin{remark}
 We may write (\ref{ker}) as 
 $$(s,t)\in \mu \text{   if and only if    }  (ps)^{\pm} =  (pt)^{\pm}   \text{   and   } (sp)^{\pm} = (tp)^{\pm}  \text{   for all   } p\in S^{+}$$
 since $(ps)^{+} = ps^{+} = p\rho_{s^{+}}$ by (\ref{eq:locsgp}b), etc.
\end{remark}
Now we define a relation $\theta$ on $S$ to be \emph{projection-separating} if $p,q\in S^{+}$ and $(p,q)\in \theta$ imply that $p = q$.  Clearly, $\mu$ is projection-separating.  It may not be a congruence, but we do have the following.
\begin{lemma}
Let $\theta$ be a $\pm$-congruence on $S$.  Then $\theta$ is projection-separating if and only if $\theta \subseteq \mu$.
\end{lemma}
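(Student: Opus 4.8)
The plan is to split into the two implications, with essentially all the content living in the forward direction, and the backward direction being a one-line inheritance argument.

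For the backward direction, suppose $\theta \subseteq \mu$. Since it was already observed that $\mu$ is projection-separating, any pair $(p,q)$ of projections with $(p,q)\in\theta\subseteq\mu$ must satisfy $p=q$; thus $\theta$ simply inherits projection-separation from $\mu$, and nothing further is needed here.

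For the forward direction, assume $\theta$ is a projection-separating $\pm$-congruence and take an arbitrary $(s,t)\in\theta$; the goal is to verify the four defining clauses of $\mu$ in (\ref{ker}). First I would use the $\pm$-congruence property directly: it gives $(s^{+},t^{+})\in\theta$ and $(s^{-},t^{-})\in\theta$. Since $s^{\pm},t^{\pm}$ are projections and $\theta$ is projection-separating, this forces $s^{+}=t^{+}$ and $s^{-}=t^{-}$, yielding the first two clauses. For the remaining two I would transport $(s,t)$ under one-sided multiplications using the congruence property. Fixing $p\in S^{+}$, left multiplication gives $(ps,pt)\in\theta$, and applying the $-$ part of the $\pm$-congruence closure yields $((ps)^{-},(pt)^{-})\in\theta$. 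As $(ps)^{-},(pt)^{-}\in S^{-}=S^{+}$ are projections, projection-separation gives $(ps)^{-}=(pt)^{-}$. The symmetric computation with right multiplication and the $+$ operation gives $(sp)^{+}=(tp)^{+}$. Since $p$ was arbitrary, all four clauses of (\ref{ker}) hold, so $(s,t)\in\mu$ and hence $\theta\subseteq\mu$.

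The argument is mechanical once the pieces are aligned, so there is no genuine obstacle beyond bookkeeping. The only point deserving a moment's care is the systematic recognition that every quantity appearing in the definition of $\mu$---namely $s^{+},t^{+},s^{-},t^{-},(sp)^{+},(tp)^{+},(ps)^{-},(pt)^{-}$---is itself a member of $S^{+}=S^{-}$, so that projection-separation is applicable, and that the four closure facts required (closure of $\theta$ under $+$, under $-$, and under left and right multiplication) are precisely what the definition of a $\pm$-congruence supplies.
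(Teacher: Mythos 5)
Your proof is correct and follows essentially the same route as the paper: the forward direction uses closure of a $\pm$-congruence under the unary operations and under one-sided translations, then applies projection-separation to each resulting pair of projections, exactly as in the paper. Your backward direction legitimately cites the already-noted fact that $\mu$ is projection-separating (the paper instead re-derives this for $(s^{+},t^{+})\in\mu$ via choices of $p$ in (\ref{ker}), a trivial difference).
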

\begin{proof}
If  $(s,t)\in \theta$, then 
$(s^{\pm},t^{\pm})\in \theta$, and since $\theta$ is projection-separating, $s^{\pm} = t^{\pm}$.  Moreover, 
$((sp)^{\pm},(tp)^{\pm})\in \theta$ for any $p\in S^{+}$, whence $(sp)^{\pm} = (tp)^{\pm}$.  Similarly $(ps)^{\pm} = (pt)^{\pm}$, and $(s,t)\in \mu$ ensues by (\ref{ker}).

Conversely, if $\theta \subseteq \mu$ and $(s^{+},t^{+})\in \theta$, then $(s^{+},t^{+}) \in \mu$, and by appropriate choices of $p$ in equation (\ref{ker}) we have 
$ s^{+}s^{+} = t^{+}s^{+} = t^{+}t^{+}, $
whence $s^{+} = t^{+}$ and $\theta$ is projection-separating.  
\end{proof}

By analogy with the theory of inverse semigroups, we  say that the localisable semigroup $T$ is \emph{fundamental} if $\mu^{T}$ is the identity relation.  

 \section{The groupoid case: $\ast$-localisable semigroups}
Recall that a category $C$ is a \emph{groupoid} if each morphism $x$ has an inverse in $C$, denoted by $x^{-1}$, such that $x\circ x^{-1} = x^{+}$ and $x^{-1}\circ x = x^{-}$.  (This is equivalent to $C$ being cancellative and regular in the von Neumann sense.)
\begin{definition}
A \emph{transcription groupoid} is a groupoid which is also a transcription category. 
\end{definition}
We shall provide a description of the corresponding transcription semigroup $\mathscr{S}(C)$, and to this end make the following definitions.
\begin{definitions}

$(S,\cdot,{}^*)$ is a \emph{regular unary semigroup} if 
it satisfies 
\begin{subequations}\label{eq:starloc}
\begin{align}
  x x^* x &= x   \hspace{3mm}   \text{    and    }			\label{eq:starloc a} \\
  x^*xx^* &= x^*          \label{eq:starloc b}
\end{align}  
{for all }  $x\in S$.   A regular unary semigroup $(S,\cdot,{}^*)$ is \emph{$\ast$-localisable} if it also satisfies 
\begin{align}
  (x(xx)^*x)^* &= x(xx)^*x ,	\label{eq:starloc c} \\
  x(xyy^*)^* &= xy(xy)^*   \hspace{3mm}   \text{    and    } \label{eq:starloc d} \\
  (x^*xy)^*y &= (xy)^*xy  .	\label{eq:starloc e}
\end{align}
\end{subequations}
\end{definitions}

The following observation is well known.
\begin{lemma}\label{Lem:idem_fix}
Let $(S,\cdot,{}^*)$ be a regular unary semigroup. Then ${}^*$ fixes every idempotent
of $S$ if and only if \eqref{eq:starloc c} holds.
\end{lemma}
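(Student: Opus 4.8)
The plan is to prove the two implications separately, organised around the single observation that \eqref{eq:starloc c} asserts precisely that the element $e = x(xx)^*x$ is a fixed point of ${}^*$. Thus the whole lemma reduces to recognising that $e$ is always idempotent in a regular unary semigroup, and conversely that every idempotent arises as such an $e$.

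For the forward direction, I would first show that $e = x(xx)^*x$ is idempotent independently of \eqref{eq:starloc c}. Writing $a = xx$ and applying the outer-inverse axiom \eqref{eq:starloc b} in the form $a^*aa^* = a^*$, one computes $e^2 = x a^*(xx) a^* x = x(a^*aa^*)x = x a^* x = e$. Since by hypothesis ${}^*$ fixes every idempotent, it follows that $e^* = e$, which is exactly \eqref{eq:starloc c}.

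For the converse, I would exploit that \eqref{eq:starloc c} is a universally quantified identity and simply specialise $x$ to an arbitrary idempotent $f$. Because $ff = f$, the inner factor collapses via \eqref{eq:starloc a}, namely $f(ff)^*f = ff^*f = f$. Substituting this into \eqref{eq:starloc c} then yields $f^* = (f(ff)^*f)^* = f(ff)^*f = f$, so ${}^*$ fixes $f$, as required.

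I do not anticipate a genuine obstacle here; the lemma is light, which fits its billing as ``well known''. The only step requiring a moment's care is the idempotency of $x(xx)^*x$, whose verification hinges on applying \eqref{eq:starloc b} to $xx$ rather than to $x$. The reverse direction is essentially immediate once one notices that every idempotent is already of the required form, so what little work there is lies in the forward implication.
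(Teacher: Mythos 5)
Your proof is correct and follows essentially the same route as the paper: the forward direction rests on observing that $x(xx)^*x$ is idempotent via \eqref{eq:starloc b} applied to $xx$, and the converse specialises \eqref{eq:starloc c} to an idempotent $f$ and collapses $f(ff)^*f$ to $f$ using \eqref{eq:starloc a}. No gaps.
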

\begin{proof}
  If ${}^*$ fixes every idempotent, then it certainly fixes $x(xx)^*x$ since \eqref{eq:starloc b} implies
  this is an idempotent for every $x\in S$. Conversely, if \eqref{eq:starloc c} holds and if $e^2 = e$, then
\[
    e^* \overset{\eqref{eq:starloc a}}{=} (ee^*e)^* = (e(ee)^*e)^*
    \overset{\eqref{eq:starloc c}}{=} e(ee)^*e = ee^*e \overset{\eqref{eq:starloc a}}{=} e\,. \qedhere
\]
\end{proof}

\begin{lemma}\label{Lem:*-fix}
Let $(S,\cdot,{}^*)$ be a 
$\ast$-localisable semigroup. 
Then (1) the identity
  \begin{equation}\label{eq:starloc b'}
    x^{**} = x
  \end{equation}
  holds, and (2) ${}^*$ fixes every $e\in S$ satisfying
  $(e^*)^2 = e^*$.
\end{lemma}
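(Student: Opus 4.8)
The plan is to treat part (1), the identity $x^{**}=x$ of \eqref{eq:starloc b'}, as the substantive claim and to read off part (2) from it almost immediately. The essential preliminary observation is that both $xx^*$ and $x^*x$ are idempotents: using \eqref{eq:starloc b} one has $(xx^*)^2 = x(x^*xx^*) = xx^*$, and dually $(x^*x)^2 = x^*(xx^*x) = x^*x$ by \eqref{eq:starloc a}. Since \eqref{eq:starloc c} holds, Lemma \ref{Lem:idem_fix} tells us that ${}^*$ fixes every idempotent, so in particular $(xx^*)^* = xx^*$ and $(x^*x)^* = x^*x$. These two ${}^*$-fixed idempotents are exactly what allow the otherwise one-sided axioms \eqref{eq:starloc d} and \eqref{eq:starloc e} to be put to work.

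Next I would extract two complementary one-sided identities by judicious substitution. Setting $y = x^*$ in \eqref{eq:starloc e} and simplifying $x^*xx^* = x^*$ via \eqref{eq:starloc b} gives $x^{**}x^* = (xx^*)^*\,xx^* = xx^*$, the last step using that $xx^*$ is a fixed idempotent. Symmetrically, replacing $x$ by $x^*$ and setting $y = x$ in \eqref{eq:starloc d}, and again using \eqref{eq:starloc b} together with $(x^*x)^* = x^*x$, yields $x^*x^{**} = x^*x\,(x^*x)^* = x^*x$. With these two identities in hand the conclusion is forced: applying \eqref{eq:starloc b} with $x$ replaced by $x^*$ gives $x^{**} = x^{**}x^*x^{**}$, and substituting first $x^{**}x^* = xx^*$ and then $x^*x^{**} = x^*x$ turns the right-hand side into $xx^*x^{**} = x(x^*x^{**}) = xx^*x$, which is $x$ by \eqref{eq:starloc a}. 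Thus \eqref{eq:starloc b'} holds.

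Part (2) then follows at once. If $e\in S$ satisfies $(e^*)^2 = e^*$, then $e^*$ is an idempotent, so ${}^*$ fixes it by Lemma \ref{Lem:idem_fix}; that is, $(e^*)^* = e^*$. But $(e^*)^* = e^{**} = e$ by part (1), so $e = e^*$, i.e.\ ${}^*$ fixes $e$, as claimed.

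The one genuine obstacle is locating the substitutions that make \eqref{eq:starloc d} and \eqref{eq:starloc e} collapse usefully: each axiom is inherently one-sided, and it is only after choosing $y=x^*$ in \eqref{eq:starloc e} and $(x,y)\mapsto(x^*,x)$ in \eqref{eq:starloc d} --- so that the inner words reduce to $x^*$ via \eqref{eq:starloc b}, while the surviving outer factors are the ${}^*$-fixed idempotents $xx^*$ and $x^*x$ --- that they deliver the complementary halves $x^{**}x^*=xx^*$ and $x^*x^{**}=x^*x$. Once both halves are available, assembling them through \eqref{eq:starloc a} and \eqref{eq:starloc b} is routine.
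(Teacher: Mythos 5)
Your proposal is correct and follows essentially the same route as the paper: both establish the two auxiliary identities $x^{**}x^* = xx^*$ and $x^*x^{**} = x^*x$ by substituting into \eqref{eq:starloc d} and \eqref{eq:starloc e} so that the inner word collapses to $x^*$ via \eqref{eq:starloc b}, using Lemma \ref{Lem:idem_fix} to fix the idempotents $xx^*$ and $x^*x$, and then assemble $x^{**}=x^{**}x^*x^{**}=xx^*x=x$; part (2) is deduced identically.
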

\begin{proof}
(1) First we show
 \begin{subequations}\label{eq:ss}
 \begin{align}
  x^* x^{**} &= x^* x       \label{eq:ssa} \\
  x^{**} x^* &= x x^*\,.    \label{eq:ssb}
  \end{align}
\end{subequations}
For \eqref{eq:ssa}, we compute
\[
x^* x^{**} \overset{\eqref{eq:starloc b}}{=} x^* (x^*xx^*)^*
\overset{\eqref{eq:starloc d}}{=} x^* x (x^* x)^*
= x^* x x^* x \overset{\eqref{eq:starloc a}}{=} x^* x\,,
\]
using Lemma \ref{Lem:idem_fix} in the third equality, and \eqref{eq:ssb} is proved dually. Thus
\[
x^{**} \overset{\eqref{eq:starloc b}}{=} x^{**}x^*x^{**}
\overset{\eqref{eq:ssa}}{=} x^{**} x^* x
\overset{\eqref{eq:ssb}}{=} x x^* x
\overset{\eqref{eq:starloc a}}{=} x\,,
\]
which is \eqref{eq:starloc b'}.

(2) If $(e^*)^2 = e^*$, then Lemma \ref{Lem:idem_fix} gives $e^{**} = e^*$ and so
$e^* = e$ by \eqref{eq:starloc b'}.
\end{proof}

Since \eqref{eq:starloc a} and \eqref{eq:starloc b'} evidently imply \eqref{eq:starloc b},
we could equally well have used \eqref{eq:starloc b'} as an axiom in place of \eqref{eq:starloc b}.
Now we can justify the terminology `$\ast$-localisable'.
\begin{theorem}\label{slisl}
  Let $(S,\cdot,{}^*)$ be a $*$-localisable semigroup and define unary operations ${}^+,{}^- :S\to S$ by
  $x^+ = xx^*$ and $x^- = x^*x$ for all $x\in S$. Then $(S,\cdot,{}^+,{}^-)$ is a localisable semigroup.

  Conversely, let $(S,\cdot,{}^+,{}^-)$ be a localisable semigroup, assume that $S$ is regular and
  suppose there exists an inverse mapping ${}^* :S\to S;x\mapsto x^*$ such that $x^+ = xx^*$ and
  $x^- = x^*x$ for all $x\in S$. Then $(S,\cdot,{}^*)$ is $*$-localisable.
\end{theorem}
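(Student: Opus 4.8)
The plan is to treat the two directions separately, in each case verifying the defining axioms of the target class while freely invoking the consequences already recorded. For the forward direction I will use that ${}^*$ fixes every idempotent (Lemma~\ref{Lem:idem_fix} together with \eqref{eq:starloc c}) and that $x^{**}=x$ (Lemma~\ref{Lem:*-fix}); for the converse I will use that $(S^+,\cdot)$ is a band (Lemma~\ref{band}(i)), that projections are fixed by ${}^+$ and ${}^-$, and the mid-point identities of Lemma~\ref{handy}.

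\emph{Forward direction.} With $x^+=xx^*$ and $x^-=x^*x$, each of these is idempotent by \eqref{eq:starloc b}, and since an idempotent $e$ is fixed by ${}^*$ we have $e=ee^*=e^+$, so $S^+=E(S)$. Axioms (\ref{eq:locsgp}a), (\ref{eq:locsgp}d) are immediate from \eqref{eq:starloc a}, and (\ref{eq:locsgp}g) holds because ${}^*$ fixes $x^+,x^-$. For (\ref{eq:locsgp}b) I would compute $(xy)^+=xy(xy)^*$ and $(xy^+)^+=xy^+(xy^+)^*$ and reduce each, via \eqref{eq:starloc d} (in the instance $x(xf)^*=xf(xf)^*$ with $f=y^+$ idempotent), to the common value $x(xy^+)^*$; axiom (\ref{eq:locsgp}e) is dual. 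The substantive point is that $S^+=E(S)$ is a band, for then (\ref{eq:locsgp}c) and its dual (\ref{eq:locsgp}f) follow from (\ref{eq:locsgp}b) by Lemma~\ref{band}(iii).

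To show $E(S)$ is a band I would take idempotents $e,f$, put $g=ef$, and argue as follows. From \eqref{eq:starloc e} with $x=e$, $y=f$ one gets $g^*f=g^*g$; since $g^*g=g^-$ is idempotent, applying \eqref{eq:starloc d} in the form $x(xf)^*=xf(xf)^*$ to $x=g^*$ gives $g^*g^*g=g^*g$. Right-multiplying by $g^*$ and using \eqref{eq:starloc b} twice yields $g^*g^*=g^*$, so $g^*$ is idempotent; as ${}^*$ fixes idempotents and $g^{**}=g$, this forces $g=g^*$ and hence $g^2=g$. This orthodoxy step is the main obstacle of the forward direction: without commuting projections it is not at all automatic, and the whole argument turns on extracting $g^*g^*=g^*$ from the localisability axioms \eqref{eq:starloc d}, \eqref{eq:starloc e}.

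\emph{Converse direction.} Now \eqref{eq:starloc a} and \eqref{eq:starloc b} are exactly the hypothesis that ${}^*$ is an inverse map, so only \eqref{eq:starloc c}, \eqref{eq:starloc d}, \eqref{eq:starloc e} need checking. For \eqref{eq:starloc c} I would first show every idempotent is a projection: Lemma~\ref{handy}(i) with $x=y=e$ gives $e=e^+e^-$, which lies in the band $S^+$, whence $e^+=e=e^-$, and then \eqref{eq:starloc b} forces $e^*=e^*ee^*=e$; by Lemma~\ref{Lem:idem_fix} this is \eqref{eq:starloc c}. For \eqref{eq:starloc d}, observe that it is equivalent, using (\ref{eq:locsgp}b), to the identity $xz^*=zz^*$ with $z=xy^+$. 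The key observation is that $z^-=(xy^+)^-=(x^-y^+)^-=x^-y^+$ by (\ref{eq:locsgp}e) and the fact that projections are fixed by ${}^-$; hence $x^-z^-=z^-$, so $x^-z^*=x^-z^-z^*=z^-z^*=z^*=x^-y^+z^*$, and left-multiplication by $x$ (using $xx^-=x$) gives $xz^*=xy^+z^*=zz^*$ as required. Axiom \eqref{eq:starloc e} is dual. The delicate point here, again reflecting the absence of the ample identities, is that the naive inverse-semigroup computation fails and one must instead route through the projection identity $z^-=x^-y^+$.
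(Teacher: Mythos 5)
Your argument is correct, and it reaches the two ``hard'' axioms by a genuinely different route from the paper's. In the forward direction the paper does not prove orthodoxy at this stage: it derives the auxiliary identity $(x^+y)^+y=x^+y$ from \eqref{eq:starloc e}, uses it to show by direct computation that $(x^+y^+)^*$ is idempotent, and concludes $(x^+y^+)^*=x^+y^+$ from Lemma~\ref{Lem:*-fix}(2), whence $(x^+y)^+=x^+y^+$. You instead prove outright that $E(S)$ is a band --- your chain $g^*f=g^*g$, then $g^*(g^*g)^*=g^*g(g^*g)^*$ giving $g^*g^*g=g^*g$, then $g^*g^*=g^*$ and $g=g^{**}=g^*$ is sound and uses only \eqref{eq:starloc b}, \eqref{eq:starloc d}, \eqref{eq:starloc e} and Lemmas~\ref{Lem:idem_fix} and~\ref{Lem:*-fix} --- identify $S^+$ with $E(S)$, and invoke Lemma~\ref{band}(iii). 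This costs a short detour but buys more: Proposition~\ref{ECES} (every idempotent is a projection; $S$ is orthodox) drops out as a by-product rather than being deduced afterwards. In the converse direction your ordering is arguably cleaner than the paper's: you establish \eqref{eq:starloc c} first and never need $x^{**}=x$, whereas the paper verifies \eqref{eq:starloc d} and \eqref{eq:starloc e} first and then appeals to Lemma~\ref{Lem:*-fix}. Your verification of \eqref{eq:starloc d} via $z^-=x^-y^+$ and $z^-z^*=z^*$ is essentially the paper's one-line chain $(xy)^+=xy^+(xy^+)^*=xx^-y^+(xy^+)^*=x(xy^+)^-(xy^+)^*=x(xy^+)^*$ rearranged, both resting on the identity \eqref{eq:altax}.

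One small imprecision to repair: Lemma~\ref{handy}(i) with $x=y=e$ gives $e=ee^+e^-e$, not $e=e^+e^-$ directly. You need the additional (easy) observations $ee^+=e(ee^*)=(ee)e^*=ee^*=e^+$ and dually $e^-e=e^-$, both of which use $e^2=e$ together with the hypotheses $e^+=ee^*$ and $e^-=e^*e$; with that line inserted, $e=(ee^+)(e^-e)=e^+e^-$ and the rest of your argument for \eqref{eq:starloc c} goes through.
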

\begin{proof}
 ($\Rightarrow$) From \eqref{eq:starloc a}, we immediately have $x^+x = x = xx^-$, $x^+ x^+ = x^+$,
 $x^- x^- = x^-$. From \eqref{eq:starloc b}, we also get $x^* x^+ = x^*$ and $x^- x^* = x^*$.

 In \eqref{eq:starloc d}, replace $x$ with $xy^+$. The left hand side becomes
 $xy^+ (xy^+ y^+)^* = (xy^+)^+$. The right side becomes $xy^+ y(xy^+y)^* = (xy)^+$. Therefore
 \begin{equation}\label{eq:loc-tmp1}
 (xy^+)^+ = (xy)^+\,.
 \end{equation}
 A dual argument using \eqref{eq:starloc e} gives $(x^- y)^- = (xy)^-$.

 Next we have
 \begin{equation}\label{eq:*-tmp1}
    (x^+ y)^+ y = x^+ y\,.
 \end{equation}
 Indeed,
 \[
 (x^+ y)^+ y = x^+ y (x^+ y)^* y = x^+ y ((x^+)^- y)^* y
 \overset{\eqref{eq:starloc e}}{=} x^+ y (x^+ y)^* x^+ y
 \overset{\eqref{eq:starloc a}}{=} x^+ y\,.
 \]
 Now we compute
 \begin{align*}
   (x^+ y^+)^* (x^+ y^+)^* &\overset{\eqref{eq:*-tmp1}}{=} (x^+ y^+)^* ((x^+ y^+)^+ y^+)^*  \\
    &= (x^+ y^+)^* (x^+ y^+)^+ ((x^+ y^+)^+ y^+)^* \\
    &\overset{\eqref{eq:starloc d}}{=} (x^+ y^+)^* ((x^+ y^+)^+ y)^+ \\
    &\overset{\eqref{eq:*-tmp1}}{=} (x^+ y^+)^* (x^+ y)^+ \\
    &\overset{\eqref{eq:loc-tmp1}}{=} (x^+ y^+)^* (x^+ y^+)^+ \\
    &= (x^+ y^+)^*\,.
 \end{align*}
Thus by Lemma \ref{Lem:*-fix}, we obtain
\begin{align}
  (x^+ y^+)^* &= x^+ y^+\,, \label{eq:*-tmp2}\\
  \intertext{hence}
  x^+ y^+ x^+ y^+ &= x^+ y^+\,. \label{eq:*-tmp3}
\end{align}
Finally,
\[
(x^+ y)^+ \overset{\eqref{eq:loc-tmp1}}{=} (x^+ y^+)^+ = x^+ y^+ (x^+ y^+)^*
\overset{\eqref{eq:*-tmp2}}{=} x^+ y^+ x^+ y^+
\overset{\eqref{eq:*-tmp3}}{=} x^+ y^+\,.
\]
A dual argument gives $(x y^-)^- = x^- y^-$.

\medskip

For the converse, we have
\[
(xy)^+ = (xy^+)^+ = xy^+(xy^+)^* = xx^- y^+(xy^+)^*
= x(xy^+)^- (xy^+)^* = x(xy^+)^*\,,
\]
which is \eqref{eq:starloc d}, and \eqref{eq:starloc e} is proved dually. By Lemma \ref{Lem:*-fix},
we also have \eqref{eq:starloc b'}.

Finally we show that ${}^*$ fixes every idempotent. We first do this for elements of $S^+$:
\[
(x^+)^* = (x^+)^* x^{++} = (x^+)^* x^+ = (x^+)^- = x^+.
\]
Now assume $e^2 = e$. Then $e^+ = ee^* = eee^* = ee^+$, and so
\[
e^* = e^* e^+ = e^* e e^+ = e^- e^+ = (ee^+)^- = (e^+)^- = e^+\,.
\]
This gives $e \overset{\eqref{eq:starloc b'}}{=} e^{**} = (e^+)^* = e^+ = e^*$.
By Lemma \ref{Lem:idem_fix}, \eqref{eq:starloc c} holds, completing the proof.
\end{proof}

Hence when we consider $(S,\cdot,^{\ast})$ as a localisable semigroup, we understand that its projections are given by $x^{+} = xx^{\ast}$ and $x^{-} = x^{\ast}x$. 

Note that for the converse in Theorem \ref{slisl}, one must assume some compatibility between an inverse mapping
and the unary operations defining localisability; regularity of the semigroup is not sufficient.
Consider, for instance, the $2$-element semilattice $S = \{0,1\}$ with $0 < 1$, which has the structure of a localisable semigroup when $x^+ = x^- = 1$ for $x =0,1$.  However this does not derive from
the only available inverse mapping, which is $x^* = x$; for $0^{+} = 1$ 
but $00^{*} = 0$.
\begin{corollary}\label{tg=op}
$(S, \cdot,\ast)$ is a $\ast$-localisable semigroup if and only if  $\mathscr{C}(S)$ is a transcription groupoid. 
\end{corollary}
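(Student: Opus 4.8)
The plan is to read this corollary as the natural packaging of Theorem~\ref{slisl} together with the category--semigroup equivalence of Theorems~\ref{th2} and~\ref{equiv}; the only genuinely new ingredient is the observation that, once the localisable structure $x^{+}=xx^{\ast}$, $x^{-}=x^{\ast}x$ is in place, the categorical inverse furnished by the groupoid property is exactly the operation $\ast$. Throughout I regard $(S,\cdot,\ast)$ as a regular unary semigroup, so that \eqref{eq:starloc a} and \eqref{eq:starloc b} are available and $\ast$ is an inverse mapping.

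For the forward implication, suppose $(S,\cdot,\ast)$ is $\ast$-localisable. Theorem~\ref{slisl} makes $(S,\cdot,{}^{+},{}^{-})$ a localisable semigroup, so Theorem~\ref{th2} produces the transcription category $\mathscr{C}(S)$, and it remains only to show that every arrow has an inverse. I would propose $x^{-1}:=x^{\ast}$ and verify the two groupoid equations. Using $x^{\ast\ast}=x$ from Lemma~\ref{Lem:*-fix} one computes $(x^{\ast})^{+}=x^{\ast}x^{\ast\ast}=x^{\ast}x=x^{-}$ and dually $(x^{\ast})^{-}=x^{\ast\ast}x^{\ast}=xx^{\ast}=x^{+}$; by (\ref{eq:cat}c) these are precisely the conditions for $x\circ x^{\ast}$ and $x^{\ast}\circ x$ to be defined. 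Since $\circ$ is a restriction of $\cdot$ (Definition~\ref{def:circ}), we then have $x\circ x^{\ast}=xx^{\ast}=x^{+}$ and $x^{\ast}\circ x=x^{\ast}x=x^{-}$, so $x^{\ast}$ is the required inverse and $\mathscr{C}(S)$ is a transcription groupoid.

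For the converse, suppose $\mathscr{C}(S)$ is a transcription groupoid. Being in particular a transcription category, Theorem~\ref{equiv} returns $S=\mathscr{S}(\mathscr{C}(S))$ as a localisable semigroup with projections $x^{+}=xx^{\ast}$, $x^{-}=x^{\ast}x$. Because $(S,\cdot,\ast)$ is a regular unary semigroup, $S$ is regular and $\ast$ is an inverse mapping compatible with these projections, so all the hypotheses of the converse half of Theorem~\ref{slisl} are in force and we conclude that $(S,\cdot,\ast)$ is $\ast$-localisable. It is illuminating to see the groupoid content explicitly: the inverse $x^{-1}$ of the arrow $x$ satisfies $xx^{-1}=x\circ x^{-1}=x^{+}$ and $x^{-1}x=x^{-}$, while (\ref{eq:cat}d) gives $(x^{-1})^{+}=x^{-}$ and $(x^{-1})^{-}=x^{+}$; hence $xx^{-1}x=x^{+}x=x$ by (\ref{eq:locsgp}a) and $x^{-1}xx^{-1}=x^{-1}x^{+}=x^{-1}(x^{-1})^{-}=x^{-1}$ by (\ref{eq:locsgp}d), so ${}^{-1}$ is itself a compatible inverse mapping.

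The main obstacle is the bookkeeping that ties the two hypotheses together in the converse: one must recognise that, for a regular unary compatible $S$, being a transcription \emph{category} already forces $\ast$-localisability (via Theorem~\ref{slisl}) and hence, by the forward direction, the groupoid property, so that the category and groupoid conditions coincide in this setting and the categorical inverse ${}^{-1}$ necessarily agrees with the given $\ast$. A secondary point to watch in the forward direction is that the composability of $x\circ x^{\ast}$ genuinely depends on the involution identity \eqref{eq:starloc b'}, which is precisely why Lemma~\ref{Lem:*-fix} must be invoked before the groupoid equations are checked.
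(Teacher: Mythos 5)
Your argument is correct, and the forward direction is essentially the paper's (you add the worthwhile detail, which the paper elides, that composability of $x\circ x^{\ast}$ rests on $(x^{\ast})^{+}=x^{\ast}x^{\ast\ast}=x^{\ast}x=x^{-}$, i.e.\ on Lemma \ref{Lem:*-fix}). The converse, however, is organised differently from the paper's, and the difference is instructive. You fix as a standing hypothesis that $(S,\cdot,\ast)$ is a regular unary semigroup, so that the given $\ast$ is already an inverse mapping compatible with $x^{+}=xx^{\ast}$, $x^{-}=x^{\ast}x$; the converse of Theorem \ref{slisl} then applies verbatim, and --- as you candidly observe --- the groupoid hypothesis is never used. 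The paper instead takes the groupoid structure as the \emph{source} of the inverse mapping: given that $\mathscr{C}(S)$ is a transcription groupoid, the map $x\mapsto x^{-1}$ is an inverse mapping satisfying $xx^{-1}=x^{+}$ and $x^{-1}x=x^{-}$ by construction, and Theorem \ref{slisl} is applied to \emph{that} map. This matters because a localisable semigroup need not admit any compatible inverse mapping at all --- this is exactly the point of the two-element semilattice example immediately preceding the corollary --- so in the paper's reading the groupoid property is precisely what supplies the hypothesis you have assumed outright; your version makes the converse true but vacuous of groupoid content. You partially repair this by remarking that the categorical inverse must coincide with the given $\ast$ (correct, by uniqueness of inverses in a groupoid together with $x\circ x^{\ast}=x^{+}$ and $x^{\ast}\circ x=x^{-}$), and that identification is also what one needs if one follows the paper's route but wishes to conclude $\ast$-localisability for the \emph{given} operation $\ast$ rather than for ${}^{-1}$. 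In short: both proofs are valid, yours is the more literal reading of the statement as written, the paper's is the one that makes the equivalence genuinely biconditional in content.
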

\begin{proof}
Let $S$ be a $\ast$-localisable semigroup.  By Theorem \ref{th2},  $C=\mathscr{C}(S)$ is a transcription category.
 For any $x\in C$, $x\circ x^{\ast}$ is defined and equals  $xx^{\ast} = x^{+}$, and similarly $ x^{\ast}\circ x =x^{-}$, showing that $C$ is a groupoid.
Conversely, let $C$ be a transcription groupoid; by Theorem \ref{th1}, $\mathscr{S}(C)$ is a localisable semigroup. Since it is also a groupoid, the map $x\mapsto x^{-1}$ satisfies the conditions for the converse part of Theorem \ref{slisl} and so $S$ is $\ast$-localisable.  
\end{proof}
The following result is Theorem 3 in \cite{Fi}; for completeness, we give a short proof using the notation of the present work.
\begin{proposition}\label{ECES}
Let $S$ be a $\ast$-localisable semigroup.  Then every idempotent is a projection; in fact, for $x\in S$,  $x^{2} = x$ implies $x = x^{+} = x^{-}$.  Consequently, $S$ is orthodox.
\end{proposition}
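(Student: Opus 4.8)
The plan is to exploit Lemma \ref{Lem:idem_fix} directly. Since a $\ast$-localisable semigroup satisfies \eqref{eq:starloc c} by definition, that lemma tells us that ${}^*$ fixes every idempotent of $S$. Thus if $x^2 = x$, then $x^* = x$, and I would immediately compute $x^+ = xx^* = x\cdot x = x$ and dually $x^- = x^*x = x$, establishing that $x = x^+ = x^-$. This is the whole content of the first assertion, and the only conceptual step is recognising that the axiom \eqref{eq:starloc c} is precisely the hypothesis that triggers Lemma \ref{Lem:idem_fix}.

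For the orthodoxy claim, I would argue that the set $E(S)$ of idempotents coincides with the set $S^+$ of projections. One inclusion is the statement just proved: every idempotent equals its own $x^+$ and hence lies in $S^+$. The reverse inclusion is routine, since each projection $x^+ = xx^*$ satisfies $(xx^*)^2 = (xx^*x)x^* = xx^*$ by \eqref{eq:starloc a}, so is idempotent. Having identified $E(S) = S^+$, I would invoke the fact that $S$ is localisable (Theorem \ref{slisl}) together with Lemma \ref{band}(i), which guarantees that $S^+$ is a band. Since $E(S) = S^+$ is then closed under multiplication, $S$ is orthodox.

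I do not anticipate a genuine obstacle here: the proposition is essentially a corollary of Lemma \ref{Lem:idem_fix}, with the involution collapsing on idempotents so that the projections $x^{\pm}$ reduce to $x$ itself. The only point demanding a moment's care is the orthodoxy claim, where one must verify the identification $E(S) = S^+$ before appealing to the band structure; but both inclusions are one-line computations, the forward one from the first part of the proposition and the reverse from \eqref{eq:starloc a}.
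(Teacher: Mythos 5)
Your proof is correct, but it takes a genuinely different and in fact more economical route than the paper's. You observe that axiom \eqref{eq:starloc c} is, by Lemma \ref{Lem:idem_fix}, exactly the statement that ${}^*$ fixes every idempotent, so $x^2=x$ gives $x^*=x$ and hence $x^+ = xx^* = x^2 = x = x^*x = x^-$ in one line. The paper never passes through $x^*=x$: it right-multiplies $x^2=x$ by $x^*$ to get only the weaker fact $xx^+=x^+$, and then works entirely inside the localisable reduct $(S,\cdot,{}^+,{}^-)$, deriving $x^+ = (xx^+)^- = x^-x^+$ and dually $x^- = x^-x^+$, whence $x^+=x^-$ and $x=xx^+=x^+$. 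What your approach buys is brevity and transparency, at the cost of leaning on the full strength of \eqref{eq:starloc c} via Lemma \ref{Lem:idem_fix}; what the paper's approach buys is that, after the first step, it only uses identities of the localisable signature, which makes clearer which part of the argument survives without the involution. Your treatment of orthodoxy --- identifying $E(S)=S^+$ (the reverse inclusion $(xx^*)^2 = (xx^*x)x^* = xx^*$ from \eqref{eq:starloc a}) and then citing Theorem \ref{slisl} and Lemma \ref{band}(i) for the band structure --- is exactly the argument the paper leaves implicit in the word ``Consequently,'' and it is a welcome piece of explicitness.
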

\begin{proof}
If $x^{2} = x$ then $xxx^{\ast} = xx^{\ast}$ and so $xx^{+} = x^{+}$.  Then 
$x^{+} = (x^{+})^{-} = (xx^{+})^{-} = x^{-}x^{+}$, by (\ref{eq:locsgp}c) with $y=x^{+}$.   Dually or similarly, $x^{-} = x^{-}x^{+}$; and then $x^{+} = x^{-}$ and $x = xx^{-} = xx^{+} = x^{+}$ as already seen.  
\end{proof}
\subsection*{Remarks}
\begin{enumerate}
\item Proposition \ref{ECES} contrasts with the general situation: if $C$ is merely a category, 
$E(C) = \{x\in E(S)\colon x^{+} = x^{-}\} .$  
(As usual, we denote the set of idempotents in $S$ or 
$C$ by $E(S)$ or $E(C)$ respectively.) For let $x \in S$ with $x=x^2$ and $x^{+} = x^{-}$.  Then by Definition \ref{def:circ},  $x\circ x$ is defined in $C$ and $x\circ x = x$ in $\mathscr{C}(S) = C$.  In the converse direction, let  $x\circ x = x$ in $C$. Then $x^{+} = x^{-}$ and $x = x\otimes x = x^2$.

\item The regular involution $^{\ast}$ is not necessarily an anti-automorphism of $S$. 
For example, let $S$ be a band and set $x^{\ast} = x =x^{+} = x^{-}$  for each $x\in S$; it is easily verified that $C = \mathscr{C}(S)$ is a transcription groupoid.  If $S$ is not a semilattice then there are  $x, y\in S$ such that $xy \neq yx$, which is equivalent to $(xy)^{\ast} \neq y^{\ast} x^{\ast}$.
\end{enumerate}
\section*{Acknowledgements}
 The authors thank a referee and the editor for the improvements resulting from (respectively) their gently pointing out a nonsensical passage and suggesting additional references.  For sharing some of their work before publication, they thank Mark Lawson and Tim Stokes, who also made helpful comments on drafts of this work including detecting errors.

\end{document}